\documentclass[10pt,twoside]{siamart1116}

\usepackage[T1]{fontenc}

\usepackage[english]{babel}
\usepackage{graphicx,epstopdf,epsfig}
\usepackage{amsfonts,epsfig,fancyhdr,graphics, hyperref,amsmath,amssymb}

\setlength{\textheight}{210mm}
\setlength{\textwidth}{165mm}
\topmargin = -10mm

\setlength{\parskip}{.1in}








\newcommand{\Names}{M.I. Bueno, Susana Furtado, Aelita Klausmeier, Joey Veltri}
\newcommand{\Title}{Linear maps preserving the Lorentz spectrum: The $2\times 2$ case}




\renewtheorem{theorem}{Theorem}[section]

\begin{document}

\bibliographystyle{plain}

\setcounter{page}{1}

\thispagestyle{empty}

 \title{\Title}
 
  \author{
M. I. Bueno\thanks{Department of Mathematics,
 University of California Santa Barbara, Santa Barbara, CA 93106, USA
(mbueno@ucsb.edu). The work of the first, third, and fourth authors was partially supported by  the NSF grant DMS-1850663. This publication is also part of the ``Proyecto de I+D+i PID2019-106362GB-I00 financiado
por MCIN/AEI/10.13039/501100011033''.}
\and
Susana Furtado\thanks{CEAFEL and Faculdade de Economia da Universidade do Porto, Rua Dr. Roberto Frias, 4200-464 Porto, Portugal,  (sbf@fep.up.pt). The work of the second author was partially
supported  by FCT-Funda\c{c}\~{a}o para a Ci\^{e}ncia e Tecnologia,
under project UIDB/04721/2020.}
\and
Aelita Klausmeier\thanks{ Department of Mathematics, University of Michigan, Ann Arbor, MI 48109, USA (aelita@umich.edu)} 
\and 
Joey Veltri\thanks{ Department of Mathematics, Purdue University, West Lafayette, IN 47906, USA (jveltri@purdue.edu)} }

\markboth{\Names}{\Title}

\maketitle

\begin{abstract}
In this paper a complete  description of the linear maps
$\phi:W_{n}\rightarrow W_{n}$ that preserve the Lorentz spectrum is given when $n=2$
and $W_{n}$ is the space $M_{n}$ of $n\times n$ real matrices or the subspace
$S_{n}$ of $M_{n}$ formed by the symmetric matrices. In both cases, it has been shown
 that $\phi(A)=PAP^{-1}$ for all $A\in W_{2}$, where $P$ is  a
matrix with a certain structure.  It was also shown that such preservers do not change the nature
of the Lorentz eigenvalues (that is, the fact that they are associated with Lorentz eigenvectors
in the interior or on the boundary of the Lorentz cone).  These results extend to $n=2$ those for $n\geq 3$
obtained by  Bueno, Furtado, and Sivakumar (2021). The case $n=2$ has some
specificities, when compared to the case $n\geq3,$ due to the fact that the
Lorentz cone in $\mathbb{R}^{2}$ is polyedral, contrary to what happens when
it is contained in $\mathbb{R}^{n}$ with $n\geq3.$ Thus, the study of the Lorentz spectrum
preservers on $W_n = M_n$ also follows from the known description of the Pareto spectrum
preservers on $M_n$. 
\end{abstract}

\begin{keywords}
Lorentz cone, Lorentz  eigenvalues, linear map preserver, $2 \times 2$ matrices.
\end{keywords}
\begin{AMS}
15A18, 58C40.
\end{AMS}

\section{Introduction}Given a matrix $A$ in $M_{n},$ the algebra of $n\times n$ matrices with real
entries, and a closed convex cone $K\subseteq\mathbb{R}^{n}$, the
\textit{eigenvalue complementarity problem} consists of finding a scalar
$\lambda\in\mathbb{R}$ and a nonzero vector $x\in\mathbb{R}^{n}$ such that
\[
x\in K,\quad Ax-\lambda x\in K^{\ast},\quad x^{T}(A-\lambda I_{n})x=0,
\]
where
\[
K^{\ast}:=\{y\in\mathbb{R}^{n}:x^{T}y\geq0,\;\forall x\in K\}
\]
denotes the (positive) dual cone of $K$. If $K=\mathbb{R}^{n}$, then the
eigenvalue complementarity problem reduces to the usual eigenvalue problem for
the matrix $A$. 

The eigenvalue complementarity problem originally arose in the
solution of a contact problem in mechanics and has since been used in
other applications in physics, economics, and engineering, including, for example, the stability of
dynamical systems \cite{martins2}.

\indent In this work we consider the complementarity eigenvalue problem associated with the Lorentz cone, defined, for $n \geq 2$, by
\[
\mathcal{K}^{n}:=\{(x,x_{n})\in\mathbb{R}^{n-1}\times\mathbb{R}: ||x||\leq
x_{n}\},
\]
also known as the ice-cream cone. By $||x||$ we denote the $2$-norm of $x.$ If
$n$ is clear from the context, we may simply write $\mathcal{K}$ instead of
$\mathcal{K}^{n}$. The Lorentz cone is widely used in
optimization theory as an instance of a second-order cone, which has special
importance in linear and quadratic programming \cite{zadgold}. 

It is well known that the Lorentz cone is self-dual, that is, $(\mathcal{K}%
^{n})^{\ast}=\mathcal{K}^{n}.$ Therefore, for $A\in M_{n}$, the eigenvalue
complementarity problem relative to $\mathcal{K}^{n}$ consists of finding a
scalar $\lambda\in\mathbb{R}$ and a nonzero vector $x\in\mathbb{R}^{n}$ such
that
\begin{equation}
x\in\mathcal{K}^{n},\quad(A-\lambda I)x\in\mathcal{K}^{n},\quad x^{T}%
(A-\lambda I)x=0, \label{Lorentz1}%
\end{equation}
where, here and throughout, $I$ denotes the identity matrix of the appropriate order. By Corollary
2.1 in \cite{Seeger1}, it is guaranteed that \eqref{Lorentz1} always admits a solution.

If a scalar $\lambda$ and a nonzero vector $x$ satisfy \eqref{Lorentz1}, we call
$\lambda$ a \emph{Lorentz eigenvalue} of $A$ and $x$ an associated
\emph{Lorentz eigenvector} of $A$. We call the set of all Lorentz eigenvalues
of $A$ the \emph{Lorentz spectrum of $A$} and denote it by $\sigma
_{\mathcal{K}}(A)$. For brevity, we write L-eigenvalue, L-eigenvector, and
L-spectrum instead of Lorentz eigenvalue, Lorentz eigenvector, and Lorentz
spectrum, respectively. We classify the L-eigenvalues of a matrix $A\in M_{n}$
by whether they correspond to L-eigenvectors in the interior or on the
boundary of the Lorentz cone. In the first case, we call them \emph{interior}
\emph{L-eigenvalues}, and in the second case, we call them \emph{boundary
L-eigenvalues}. We denote the set of interior L-eigenvalues by $\sigma
_{\mathcal{K}}^{int}(A)$ and the set of boundary L-eigenvalues by
$\sigma_{\mathcal{K}}^{bd}(A)$.

The roots of the characteristic polynomial of a matrix $A\in M_{n}$ will be
called the \emph{standard eigenvalues} of $A,$ to distinguish them from the L-eigenvalues.

In \cite{Bueno2021} the authors focused on the problem of studying the
linear maps $\phi:W_{n}\rightarrow W_{n}$ that preserve the L-spectrum, that
is, such that $\sigma_{\mathcal{K}}(\phi(A))=\sigma_{\mathcal{K}}(A),$ for all
$A\in W_{n}$, where $W_{n}$ is a subspace of $M_{n}$ and $n\geq 3.$ The authors
started by
characterizing such maps $\phi$ for the following subspaces $W_{n}$ of
$M_{n}$: the subspace of diagonal matrices; the subspace of block-diagonal
matrices $\widetilde{A}\oplus\lbrack a]$, where $\widetilde{A}\in M_{n-1}$ is
symmetric; and the subspace of block-diagonal matrices $\widetilde{A}%
\oplus\lbrack a]$, where $\widetilde{A}\in M_{n-1}$ is a generic matrix. In
each of these cases, it was shown that the maps should be what were called
\emph{standard maps}, that is, maps of the form $\phi(A)=PAQ$ for all $A\in
W_{n}$ or $\phi(A)=PA^{T}Q$ for all $A\in W_{n},$ for some matrices $P,Q\in
M_{n}$.  However, when $W_{n}$ is either $M_{n}$ or the subspace $S_{n}$ of
symmetric matrices in $M_{n},$ just
the standard linear maps $\phi
:W_{n}\rightarrow W_{n}$  that preserve the
L-spectrum were described, and it was conjectured that linear maps that are not
standard do not preserve the L-spectrum. (See also the recent paper
\cite{Seeger3} in which the linear preservers $\phi:M_{n}\rightarrow M_{n}$
are investigated.)

The goal of this paper is to consider the case $n=2.$ The main differentiating feature
between the cases $n\geq 3$ and $n=2$ is that the Lorentz cone in $\mathbb{R}^{2}$
is \emph{polyhedral}, i.e., it can be expressed as the intersection of a
finite number of half-spaces. This implies that the L-spectrum of a matrix in
$M_{2}$ is always finite, contrary to what happens for matrices of order
$n\geq 3$, which can have infinite L-spectrum.

To our knowledge, the only
polyhedral cone whose spectral linear preservers on $M_n$ have been studied in depth in
the literature is the Pareto cone \cite{zadsha}. It can be easily verified that, for $n=2,$ the Pareto cone
is a clockwise rotation of the Lorentz cone by an  angle of $\frac{\pi}{4}.$
So, a description of the L-spectrum preservers on $M_{2}$ follows from the one
of the Pareto spectrum preservers on $M_{n}$ given in \cite{zadsha}, taking
$n=2,$ and reciprocally. 

In this paper, we give an independent proof of the characterization
of the linear maps $\phi:W_{2}\rightarrow W_{2}$ that preserve the L-spectrum
when $W_{2}=M_{2}$ and, in addition, consider the new case $W_{2}=S_{2},$ the
subspace of $M_{2}$ of symmetric matrices$.$ Also, for both cases of $W_{2}$
we show that the nature of the L-eigenvalues (being associated with
L-eigenvectors in the interior or on the boundary of the Lorentz cone) is not
changed by the L-spectrum preservers. To prove our results, we introduce
techniques that explore the knowledge of the Lorentz spectrum of matrices in
$M_{2}$ and hope that the ideas behind our proofs can be extended to complete
the study of the L-spectrum preservers studied in \cite{Bueno2021} for
matrices in $M_{n}$, with $n\geq 3$, in which case no connection exists with the
Pareto cone. It follows from our characterization that such preservers on
$M_{2}$ are standard and that, in the case $W_{2}=M_{2}$, their form is less
restrictive than the one for $n\geq3$. (See Theorem \ref{rr} where the result
for $n\geq3$ is recalled.)

We next give the main results of this paper. Recall that $M_2$ denotes the space of $2\times 2$ real matrices and $S_2$ denotes the subspace of $M_2$ of symmetric matrices. 

\begin{theorem}
\label{tmain1} Let  $\phi:W_{2}\rightarrow W_{2}$ be a linear map, with
$W_{2}\in\{M_{2},S_{2}\}$. Then, $\phi$ preserves the L-spectrum if and
only if $\phi(A)=PAP^{-1}$ for all $A\in W_{2}$, or $\phi(A)=QAQ^{-1}$
for all $A\in W_{2}$, where
\begin{equation}
P=\left[
\begin{array}
[c]{cc}%
\alpha & \beta\\
\beta & \alpha
\end{array}
\right] \quad \text{ and } \quad Q=\left[
\begin{array}
[c]{cc}%
-\alpha & -\beta\\
\beta & \alpha
\end{array}
\right]  ,\label{PQ}%
\end{equation}
for some $\alpha,\beta\in\mathbb{R}$ with $\alpha^{2}-\beta^{2}=1$, and $\beta=0$ if $W_{2}=S_{2}.$
\end{theorem}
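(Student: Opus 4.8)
The plan is to prove the two implications separately, using throughout that $\mathcal{K}^2$ is the polyhedral cone generated by its two extreme rays, spanned by $u_1=(1,1)^T$ and $u_2=(-1,1)^T$. For the ``if'' direction I would first check, by direct computation, that $Pu_1=(\alpha+\beta)u_1$ and $Pu_2=(\alpha-\beta)u_2$, while $Q$ interchanges the two rays, $Qu_1=(\alpha+\beta)u_2$ and $Qu_2=(\alpha-\beta)u_1$. Since $\alpha^2-\beta^2=1$ forces $|\alpha|>|\beta|$, the scalars $\alpha\pm\beta$ are nonzero and of the same sign, so each of $P$ and $Q$ is an automorphism of $\mathcal{K}^2$, carrying $\mathrm{int}\,\mathcal{K}^2$ onto $\mathrm{int}\,\mathcal{K}^2$ and $\partial\mathcal{K}^2$ onto $\partial\mathcal{K}^2$. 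I would then verify that $x\mapsto Px$ (resp.\ $x\mapsto Qx$) is a bijection between the L-eigenpairs of $A$ and those of $PAP^{-1}$ (resp.\ $QAQ^{-1}$) that preserves the eigenvalue: the cone-membership conditions are preserved because the map is a cone automorphism, and the complementarity identity $x^T(A-\lambda I)x=0$ is preserved because, for a boundary eigenvector $x$ on an extreme ray, the Gram matrix $P^TP=Q^TQ=\left[\begin{smallmatrix}\alpha^2+\beta^2 & 2\alpha\beta\\ 2\alpha\beta & \alpha^2+\beta^2\end{smallmatrix}\right]$ has $u_1,u_2$ as eigenvectors, so the quadratic form is merely rescaled (for interior eigenvectors the identity is trivial, since $(A-\lambda I)x=0$). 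This yields preservation of the L-spectrum and of the nature of each L-eigenvalue.

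For the ``only if'' direction I would start from an explicit catalogue of $\sigma_{\mathcal{K}}(A)$ for $A=\left[\begin{smallmatrix}a&b\\ c&d\end{smallmatrix}\right]$. The candidate boundary L-eigenvalues are the linear functionals $\lambda_1(A)=\tfrac12u_1^TAu_1=\tfrac{(a+d)+(b+c)}{2}$ and $\lambda_2(A)=\tfrac12u_2^TAu_2=\tfrac{(a+d)-(b+c)}{2}$, so that $\lambda_1+\lambda_2=\mathrm{tr}\,A$; each belongs to $\sigma_{\mathcal{K}}(A)$ exactly when an explicit linear inequality in the entries of $A$ holds. The interior L-eigenvalues are precisely the real standard eigenvalues whose eigenvector lies, up to sign, in $\mathrm{int}\,\mathcal{K}^2$. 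The strategy is to evaluate $\sigma_{\mathcal{K}}(\phi(A))=\sigma_{\mathcal{K}}(A)$ on well-chosen low-parameter families: scalar matrices, for which $\sigma_{\mathcal{K}}(cI)=\{c\}$; matrices whose L-spectra are governed by the boundary data, which force the pair of linear functionals $\{\lambda_1\circ\phi,\lambda_2\circ\phi\}$ to coincide with $\{\lambda_1,\lambda_2\}$, equivalently $\{\phi^{*}(u_1u_1^T),\phi^{*}(u_2u_2^T)\}=\{u_1u_1^T,u_2u_2^T\}$ for the Frobenius-adjoint $\phi^{*}$; and matrices carrying a single interior L-eigenvalue, which fix the action of $\phi$ on the remaining directions. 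Carrying this out should determine the matrix of $\phi$ up to the discrete choice of whether the two extreme rays are preserved or interchanged.

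It then remains to identify the resulting maps with the forms in \eqref{PQ}. The constraints above say that $\phi$ is conjugation by an invertible $R$ that either fixes both extreme rays or swaps them; in the first case $R$ has $u_1,u_2$ as eigenvectors and hence the symmetric form of $P$, and in the second case $R$ has the form of $Q$. Because conjugation is unchanged under scaling $R$ by a nonzero scalar, I may normalize $\det R=\alpha^2-\beta^2=1$, which yields exactly $P$ and $Q$. Finally, when $W_2=S_2$ one must impose $\phi(S_2)\subseteq S_2$: conjugation by $P$ keeps every symmetric matrix symmetric if and only if $P^TP=P^2$ is a scalar matrix, and since $|\alpha|\geq1$ this holds precisely when $\beta=0$; the symmetric preservers then reduce to the identity ($P=\pm I$) and the single off-diagonal sign change ($Q=\pm\,\mathrm{diag}(-1,1)$), in agreement with the statement.

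The main obstacle is the middle step, upgrading ``$\phi$ permutes the boundary functionals'' to ``$\phi$ is a conjugation by a ray-preserving matrix'': a priori $\phi$ is an arbitrary linear map ($16$ parameters on $M_2$, $9$ on $S_2$), and because the interior L-eigenvalues depend nonlinearly on $A$ through the characteristic polynomial, one must choose the test families so as to control the linear boundary data and the nonlinear interior data simultaneously, and thereby eliminate every non-standard linear map. As an independent check, the entire argument can be transported to the Pareto cone: the change of basis $U=[\,u_1\mid u_2\,]$, which is conformal since $U^TU=2I$, carries $\mathcal{K}^2$ to the nonnegative orthant, so the conjugated map $\widetilde{A}\mapsto U^{-1}\phi(U\widetilde{A}U^{-1})U$ is a Pareto-spectrum preserver on $M_2$; the known description of such preservers as conjugations by automorphisms of the orthant (positive diagonal times permutation) transfers back, through $U$, to precisely the forms $P$ and $Q$.
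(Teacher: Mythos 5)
Your ``if'' direction is sound and is essentially the paper's argument in different clothing: the paper shows that $P$ maps $\mathcal{K}^2$ into itself and preserves orthogonality of vectors in $\mathcal{K}^2$, which is exactly your observation that $u_1=(1,1)^T$ and $u_2=(-1,1)^T$ are eigenvectors of $P$ (and of $P^TP$), so that L-eigenpairs transport under $x\mapsto Px$. (One small normalization you skip: $\alpha\pm\beta$ could both be negative, in which case $P$ maps $\mathcal{K}^2$ onto $-\mathcal{K}^2$; one replaces $P$ by $-P$, as the paper does by assuming $\alpha>0$.) The ``only if'' direction, however, is a plan rather than a proof, and the part you defer --- ``carrying this out should determine the matrix of $\phi$'' --- is precisely where all the work lies. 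The most serious unaddressed point is your assertion that equality of L-spectra on suitable test families ``forces the pair of linear functionals $\{\lambda_1\circ\phi,\lambda_2\circ\phi\}$ to coincide with $\{\lambda_1,\lambda_2\}$.'' Equality of L-spectra is a statement about \emph{sets} of eigenvalues with no labels attached: a priori, a boundary L-eigenvalue of $A$ could be realized as an \emph{interior} L-eigenvalue of $\phi(A)$, in which case it is a root of $\det(\phi(A)-\lambda I)$ and carries no linear information about the anti-diagonal of $\phi(A)$ at all. The paper closes this loophole with Lemma \ref{key-result}: when $A$ has two distinct \emph{strict} boundary L-eigenvalues, $-A$ has no boundary L-eigenvalues, and Corollary \ref{intA-A} ($\lambda\in\sigma_{\mathcal{K}}^{int}(A)\iff-\lambda\in\sigma_{\mathcal{K}}^{int}(-A)$) then forces the boundary eigenvalues of $A$ to reappear as boundary eigenvalues of $\phi(A)$. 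Without this (or an equivalent) separation argument, your identification of the boundary functionals does not get off the ground.

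Two further gaps of the same kind. First, even granting boundary-to-boundary matching, you must show the matching is \emph{consistent} across all of $W_2$ --- i.e.\ that the anti-trace is either preserved for every $A$ or negated for every $A$, not preserved for some and negated for others; the paper's Corollary \ref{trace} proves this by letting a parameter $\delta$ range over an infinite set and observing that the ``wrong'' sign can occur for at most one value of $\delta$. Second, pinning down $\phi(E_{12}+E_{21})$ exactly (as $\pm(E_{12}+E_{21})$, with no free parameters) is not achievable from the boundary functionals alone: one must rule out that $\pm1$ are interior L-eigenvalues of the image, and the paper does this with a genuinely topological argument (Lemmas \ref{lintlim} and \ref{E1221pert}: interior L-eigenvalues persist under small perturbations, while $E_{12}+E_{21}$ has perturbations with no L-eigenvalue near $\pm1$, contradicting continuity of $\phi^{-1}$). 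Your outline contains no mechanism for this step, and your closing remark that the whole argument ``can be transported to the Pareto cone'' is only an alternative for $W_2=M_2$ (as the paper itself notes in the introduction); it does not cover $W_2=S_2$, since the Pareto-preserver classification in the literature is for maps on all of $M_2$, not on the three-dimensional subspace $S_2$. So as written the necessity half is a program whose hardest steps are acknowledged but not executed.
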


\begin{corollary}
\label{cornature}Let $\phi:W_{2}\rightarrow W_{2}$ be a linear map. If $\phi$
preserves the L-spectrum, then, for all $A\in W_{2}$,
\[
\sigma_{\mathcal{K}}^{int}(A)=\sigma_{\mathcal{K}}^{int}(\phi(A))\quad
\text{and}\quad\sigma_{\mathcal{K}}^{bd}(A)=\sigma_{\mathcal{K}}^{bd}%
(\phi(A)).
\]

\end{corollary}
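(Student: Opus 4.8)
My plan is to deduce the corollary directly from Theorem~\ref{tmain1}. By that theorem, after possibly replacing the conjugating matrix by its negative (which leaves $\phi$ unchanged), I may write $\phi(A)=MAM^{-1}$ for all $A\in W_2$, where $M\in\{P,Q\}$ is as in \eqref{PQ} with $\alpha>0$. The first step is to record how $M$ acts on $\mathcal{K}^2=\{(v_1,v_2):|v_1|\le v_2\}$. A short computation gives $M^{T}JM=J$ with $J=\mathrm{diag}(-1,1)$, so $M$ preserves the Lorentz form $q(v)=v_2^2-v_1^2$; since $\alpha>0$ forces the image of an interior point such as $(0,1)^T$ to stay in the upper cone, I obtain $M\mathcal{K}^2=\mathcal{K}^2$, with $M$ mapping $\mathrm{int}(\mathcal{K}^2)$ onto $\mathrm{int}(\mathcal{K}^2)$ and $\partial\mathcal{K}^2$ onto $\partial\mathcal{K}^2$. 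Because $\mathcal{K}^2$ is polyhedral with exactly the two extreme rays through $(1,1)^T$ and $(-1,1)^T$, I would then note that $M$ permutes these two rays: using $\alpha>|\beta|$, the matrix $P$ fixes each ray (scaling by $\alpha+\beta$ and $\alpha-\beta$), while $Q$ interchanges them.

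Next I would set up the bijection $x\mapsto Mx$ between L-eigenvectors of $A$ and of $\phi(A)$ and show it preserves both the L-eigenvalue and the interior/boundary nature. For an L-eigenvector $x$ of $A$ with L-eigenvalue $\lambda$, put $y=(A-\lambda I)x$; the identity $(MAM^{-1}-\lambda I)(Mx)=My$ transfers the membership conditions $x,y\in\mathcal{K}^2$ to $Mx,My\in\mathcal{K}^2$ at once, since $M\mathcal{K}^2=\mathcal{K}^2$. The only condition needing work is the orthogonality $(Mx)^{T}(My)=0$. When $x\in\mathrm{int}(\mathcal{K}^2)$, self-duality of $\mathcal{K}^2$ together with $x^{T}y=0$ forces $y=0$, so $\lambda$ is a standard eigenvalue with an interior standard eigenvector and the condition is automatic; moreover $Mx\in\mathrm{int}(\mathcal{K}^2)$ and $MAM^{-1}(Mx)=\lambda Mx$, so the interior nature is retained. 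When $x\in\partial\mathcal{K}^2$, the boundary is the union of the two extreme rays, so $x$ lies on one of them; then $x^{T}y=0$ with $y\in\mathcal{K}^2$ forces $y$ onto the complementary (orthogonal) ray, or $y=0$. Since $M$ permutes the two rays, $Mx$ and $My$ again lie on orthogonal rays, whence $(Mx)^{T}(My)=0$ and the boundary nature is retained.

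Since $P^{-1}$ and $Q^{-1}=Q$ are again of the form \eqref{PQ}, the same reasoning applied to $M^{-1}$ yields the reverse correspondence, so $x\mapsto Mx$ is a bijection on L-eigenvectors that respects L-eigenvalues and the interior/boundary classification. Reading off eigenvalues then gives $\sigma_{\mathcal{K}}^{int}(A)=\sigma_{\mathcal{K}}^{int}(\phi(A))$ and $\sigma_{\mathcal{K}}^{bd}(A)=\sigma_{\mathcal{K}}^{bd}(\phi(A))$, as claimed. I expect the main obstacle to be precisely the orthogonality (complementarity) step, and it is exactly here that the $n=2$ polyhedral structure is indispensable: on the boundary, complementarity confines $x$ and $(A-\lambda I)x$ to a pair of orthogonal edges, and the preservers $P,Q$ act on $\mathcal{K}^2$ by permuting those edges, so orthogonality survives even though $M$ is not an orthogonal matrix.
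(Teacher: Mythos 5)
Your proposal is correct and follows essentially the same route as the paper: invoke Theorem~\ref{tmain1}, show the conjugating matrix preserves the Lorentz quadratic form (the paper's identity $z_1^2-z_2^2=x_1^2-x_2^2$ is exactly your $M^TJM=J$), hence maps interior to interior and boundary to boundary, and transfer complementarity via the observation that orthogonal nonzero cone vectors sit on the two extreme rays, which $M$ permutes. The only cosmetic difference is that you treat $P$ and $Q$ uniformly, whereas the paper reduces the $Q$ case to the $P$ case by conjugating with $T=[-1]\oplus[1]$.
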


The paper is organized as follows. In Section \ref{background} we introduce
some known results in the literature  regarding the
L-spectrum of a matrix $A\in M_{n}$ and its linear preservers. In Section
\ref{spectrum} we obtain a description of the L-eigenvalues of a generic
matrix in $M_{2}$ and give some related results that will be helpful in the
proof of Theorem \ref{tmain1}. In Section \ref{basis} we deduce some
conditions that should be satisfied by the images of matrices in certain bases
for $S_{2}$ and $M_{2}$, respectively, under an L-spectrum linear preserver.
Finally, in Section \ref{proofmain}, we prove Theorem \ref{tmain1} and
Corollary \ref{cornature}. We
conclude the paper with some final remarks in Section \ref{conclusions}.

\section{Background}

\label{background}

In this section we present some results known in the literature concerning the
characterization of the L-spectrum of a matrix in $M_n$, and properties of linear
preservers of the L-spectrum. We also introduce some related useful concepts
and notation.

\subsection{L-spectrum of a matrix}

We first observe that
\[
\sigma_{\mathcal{K}}(A)=\sigma_{\mathcal{K}}^{int}(A)\cup\sigma_{\mathcal{K}%
}^{bd}(A),
\]
where this union is not necessarily disjoint. (Recall the definitions of interior and boundary eigenvalues in the introduction.)

We also note that any L-eigenvector $[x$ $x_{n}]^{T}$ of $A\in M_{n}$, with $x_n\in \mathbb{R}$,  can be
normalized to have $x_{n}=1$ while remaining in the Lorentz cone. Such a
normalized L-eigenvector corresponds to an interior L-eigenvalue if $||x||<1$
and to a boundary L-eigenvalue if $||x||=1$.

The next characterization of interior and boundary L-eigenvalues of a matrix
$A\in M_{n}$ is known \cite{Seeger2}. 

\begin{proposition}
\label{intchar} Let $A\in M_{n}.$ Then,

\begin{enumerate}
\item $\lambda$ is an interior L-eigenvalue of $A$ if and only if $\lambda$ is
a standard eigenvalue of $A$ associated with an eigenvector in the interior of
$\mathcal{K}^{n}$.

\item $\lambda$ is a boundary L-eigenvalue of $A$ if and only if there is some
$s\geq0$ and a vector $x\in \mathbb{R}^{n-1}$, with  $||x||=1$, such that
\[
(A-\lambda I)\left[
\begin{array}
[c]{c}%
x\\
1
\end{array}
\right]  =s\left[
\begin{array}
[c]{c}%
-x\\
1
\end{array}
\right]  .
\]

\end{enumerate}
\end{proposition}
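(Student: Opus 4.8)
The plan is to work directly from the complementarity formulation \eqref{Lorentz1}, exploiting the self-duality $(\mathcal{K}^{n})^{\ast}=\mathcal{K}^{n}$, and to treat the two parts separately according to whether the associated L-eigenvector lies in the interior or on the boundary of $\mathcal{K}^{n}$. The normalization observed just before the statement (any L-eigenvector can be scaled so that its last entry is $1$ while staying in the cone) lets me write interior eigenvectors and boundary eigenvectors in a convenient standard form.

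For part 1, I would first recall the general fact that, for a closed convex cone $K$ with nonempty interior, a vector $x$ lies in $\operatorname{int}(K)$ if and only if $x^{T}y>0$ for every nonzero $y\in K^{\ast}$; by self-duality this says that an interior vector of $\mathcal{K}^{n}$ has strictly positive inner product with every nonzero element of $\mathcal{K}^{n}$ itself. Then, if $\lambda$ is an interior L-eigenvalue with interior L-eigenvector $x$, the two conditions $(A-\lambda I)x\in\mathcal{K}^{n}$ and $x^{T}(A-\lambda I)x=0$ force $(A-\lambda I)x=0$, so $\lambda$ is a standard eigenvalue with interior eigenvector $x$. The converse is immediate: if $Ax=\lambda x$ with $x\in\operatorname{int}(\mathcal{K}^{n})$, then $x$, $(A-\lambda I)x=0$, and the orthogonality condition all hold, exhibiting $\lambda$ as an interior L-eigenvalue.

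For part 2, the crux is to identify explicitly the complementary face of a boundary point. Normalizing a boundary L-eigenvector to the form $u=[x^{T}\ 1]^{T}$ with $\|x\|=1$, I would set $v:=(A-\lambda I)u$ and use the complementarity conditions $v\in\mathcal{K}^{n}$ and $u^{T}v=0$. Writing $v=[w^{T}\ w_{n}]^{T}$, the orthogonality $x^{T}w+w_{n}=0$ together with the cone membership $\|w\|\leq w_{n}$ pins down $w$ and $w_{n}$: combining $w_{n}=-x^{T}w$ with the Cauchy--Schwarz bound $|x^{T}w|\leq\|w\|$ and with $w_{n}\geq\|w\|$ yields $w_{n}=\|w\|$ together with equality in Cauchy--Schwarz, whence $w$ is a nonpositive multiple of $x$. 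This gives $v=s\,[-x^{T}\ 1]^{T}$ for some $s\geq0$, which is the asserted equation. Conversely, given such an $s$ and an $x$ with $\|x\|=1$, a direct check shows that both $u$ and $(A-\lambda I)u=s\,[-x^{T}\ 1]^{T}$ lie in $\mathcal{K}^{n}$ and that $u^{T}(A-\lambda I)u=s(1-\|x\|^{2})=0$, so $\lambda$ is a boundary L-eigenvalue.

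I expect the main obstacle to be the equality analysis in part 2: one must argue carefully that the simultaneous constraints coming from orthogonality and from cone membership force equality in Cauchy--Schwarz, hence collinearity of $w$ with $x$, and then track the sign to conclude $s\geq0$ rather than merely $s\in\mathbb{R}$. The interior case, by contrast, reduces cleanly to the strict-positivity characterization of interior points of a self-dual cone.
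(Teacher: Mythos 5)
The paper does not prove this proposition at all: it is quoted as a known result from Seeger and Torki \cite{Seeger2}, so there is no in-paper argument to compare against. Your self-contained proof is correct. Part 1 uses exactly the right tool (an interior point of a closed convex cone has strictly positive inner product with every nonzero element of the dual cone, which by self-duality is $\mathcal{K}^{n}$ itself), forcing $(A-\lambda I)x=0$ from the complementarity condition; the converse is immediate. In Part 2 the equality analysis is sound: from $w_{n}=-x^{T}w$, Cauchy--Schwarz with $\|x\|=1$ gives $w_{n}\leq\|w\|$, which against the cone constraint $\|w\|\leq w_{n}$ forces $w_{n}=\|w\|$ and equality in Cauchy--Schwarz with the negative sign, hence $w=-sx$ with $s=\|w\|\geq0$; the converse is the direct verification you describe. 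The only point worth making explicit (the paper records it just before the statement) is that a nonzero vector of $\mathcal{K}^{n}$ necessarily has positive last coordinate, which justifies the normalization to last entry $1$ in both parts.
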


From Proposition \ref{intchar}, we have the following useful observation.

\begin{corollary}
\label{intA-A} \label{l1}Let $A\in M_{n}.$ Then, $\lambda\in\sigma
_{\mathcal{K}}^{int}(A)$ if and only if $-\lambda\in\sigma_{\mathcal{K}}%
^{int}(-A)$.
\end{corollary}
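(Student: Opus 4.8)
The plan is to derive this directly from part 1 of Proposition \ref{intchar}, which reduces membership in $\sigma_{\mathcal{K}}^{int}$ to the ordinary eigenvalue problem restricted to eigenvectors lying in the interior of the cone. First I would use Proposition \ref{intchar}(1) to restate the hypothesis as follows: $\lambda\in\sigma_{\mathcal{K}}^{int}(A)$ holds if and only if there is a vector $x$ in the interior of $\mathcal{K}^{n}$ with $Ax=\lambda x$. I would then invoke the elementary equivalence $Ax=\lambda x \iff (-A)x=(-\lambda)x$, which holds for the very same vector $x$; negating a matrix negates its standard eigenvalues while leaving its eigenvectors unchanged.

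The key point, and the reason the statement is immediate rather than delicate, is that lying in the interior of $\mathcal{K}^{n}$ is a property of the eigenvector $x$ alone and is entirely insensitive to the sign of the associated eigenvalue or to the replacement of $A$ by $-A$. Hence the same $x$ witnesses that $-\lambda$ is a standard eigenvalue of $-A$ with eigenvector in the interior of $\mathcal{K}^{n}$, and applying Proposition \ref{intchar}(1) once more, now to the matrix $-A$ and the scalar $-\lambda$, yields $-\lambda\in\sigma_{\mathcal{K}}^{int}(-A)$. The converse follows by symmetry: applying the forward implication to $-A$ and $-\lambda$ and using $-(-A)=A$ returns the original statement, so no separate argument is needed.

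There is no genuine obstacle in this proof. The only subtlety worth flagging is that one must invoke part 1 of Proposition \ref{intchar} rather than part 2: the boundary characterization involves the auxiliary scalar $s\geq 0$, whose sign constraint would not survive the negation $A\mapsto -A$, so the analogous assertion for $\sigma_{\mathcal{K}}^{bd}$ would require a genuinely different argument and does not come for free from the same reasoning.
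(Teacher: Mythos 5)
Your proof is correct and matches the paper's intent: the paper gives no explicit argument, stating only that the corollary follows from Proposition \ref{intchar}, and the route you take (part 1 of that proposition plus the observation that negating $A$ negates standard eigenvalues while fixing eigenvectors, hence fixing membership of the eigenvector in the interior of $\mathcal{K}^{n}$) is exactly the intended one. Your closing remark about why the same reasoning does not transfer to boundary L-eigenvalues is a correct and worthwhile observation.
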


In contrast with interior L-eigenvalues, a boundary L-eigenvalue may or may
not  be a standard eigenvalue. A surprising fact, compared with the
classical eigenvalue problem, is that a matrix may have infinitely many
boundary L-eigenvalues, though this does not occur in the $2\times2$ case
since the Lorentz cone for $n=2$ is  a polyhedral cone. (See
\cite{Seeger2} for a proof that there are only finitely many complementarity
eigenvalues relative to a polyhedral cone.)

\subsection{Linear preservers of the L-spectrum}

In \cite{Bueno2021} the following important result was shown for matrices of
size $n\geq3$, although the presented proof is also valid for $2\times2$
matrices. By $W_n$ we denote any of the spaces $M_n$ or  $S_n$, the subspace of symmetric matrices.

\begin{proposition}
\cite{Bueno2021} \label{bijunit} Let $n\geq2$. If  $\phi:W_{n}\rightarrow W_{n}$ is a linear map preserving the
L-spectrum, then $\phi$ is bijective and $\phi(I)=I$.
\end{proposition}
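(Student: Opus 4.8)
The plan is to derive both assertions from a single \emph{neutrality lemma}: if $C\in W_{n}$ satisfies $\sigma_{\mathcal{K}}(M+C)=\sigma_{\mathcal{K}}(M)$ for every $M\in W_{n}$, then $C=0$. Granting this, injectivity is immediate. Suppose $\phi(C)=0$. Then for each $M\in W_{n}$ linearity gives $\phi(M+C)=\phi(M)$, and applying the preservation hypothesis to $M+C$ and to $M$ yields $\sigma_{\mathcal{K}}(M+C)=\sigma_{\mathcal{K}}(\phi(M+C))=\sigma_{\mathcal{K}}(\phi(M))=\sigma_{\mathcal{K}}(M)$. Thus $C$ is neutral, so the lemma forces $C=0$; hence $\ker\phi=\{0\}$, and since $\dim W_{n}<\infty$, $\phi$ is bijective.

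For $\phi(I)=I$, write $B=\phi(I)$ and first record the shift identity $\sigma_{\mathcal{K}}(A+\lambda I)=\lambda+\sigma_{\mathcal{K}}(A)$, which is immediate from \eqref{Lorentz1} since $(A+\lambda I)-(\mu+\lambda)I=A-\mu I$. Applying preservation to $A+I$ and using this identity twice gives, for every $A\in W_{n}$, the chain $\sigma_{\mathcal{K}}(\phi(A)+B)=\sigma_{\mathcal{K}}(\phi(A+I))=\sigma_{\mathcal{K}}(A+I)=1+\sigma_{\mathcal{K}}(A)=1+\sigma_{\mathcal{K}}(\phi(A))=\sigma_{\mathcal{K}}(\phi(A)+I)$. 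Since $\phi$ is now known to be surjective, $N:=\phi(A)$ ranges over all of $W_{n}$, so $\sigma_{\mathcal{K}}((N+I)+(B-I))=\sigma_{\mathcal{K}}(N+I)$ for all $N$; as $N+I$ also ranges over $W_{n}$, the matrix $C=B-I\in W_{n}$ is neutral, whence $B=I$.

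It remains to prove the neutrality lemma. Taking $M=0$ gives $\sigma_{\mathcal{K}}(C)=\sigma_{\mathcal{K}}(0)=\{0\}$. Because the extreme values $\max_{0\neq x\in\mathcal{K}}x^{T}Cx/\|x\|^{2}$ and $\min_{0\neq x\in\mathcal{K}}x^{T}Cx/\|x\|^{2}$ are themselves L-eigenvalues of $C$ (the standard variational description of the largest and smallest complementarity eigenvalues; see \cite{Seeger2}), both must equal $0$, so $x^{T}Cx=0$ for all $x\in\mathcal{K}$. As $\mathcal{K}$ has nonempty interior, this forces the symmetric part of $C$ to vanish, i.e.\ $C$ is skew-symmetric. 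When $W_{n}=S_{n}$ this already yields $C=0$, completing the symmetric case.

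For $W_{n}=M_{n}$ one must still exclude a nonzero skew-symmetric $C$. I expect this to be the main obstacle: the extreme (Rayleigh) L-eigenvalues depend only on the symmetric part and hence cannot detect $C$, so the obstruction must come from the \emph{interior} L-eigenvalues via Proposition \ref{intchar}(1). The plan is to exhibit a symmetric $M$ possessing an interior L-eigenvalue that is destroyed by the perturbation. In the representative case $n=2$, with $C=\begin{bmatrix}0&c\\-c&0\end{bmatrix}$ and $c\neq0$, take $M=\mathrm{diag}(a,d)$ with $a\neq d$ and $|a-d|<2|c|$: the vector $e_{2}\in\operatorname{int}\mathcal{K}^{2}$ shows $d\in\sigma_{\mathcal{K}}^{int}(M)$, whereas $M+C$ has non-real standard eigenvalues (so no interior L-eigenvalue), and a short computation with Proposition \ref{intchar}(2) shows $d$ is not a boundary L-eigenvalue of $M+C$ either; hence $d\notin\sigma_{\mathcal{K}}(M+C)=\sigma_{\mathcal{K}}(M)$, a contradiction forcing $c=0$. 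The general-$n$ version proceeds analogously, perturbing an interior eigenpair built on a vector in $\operatorname{int}\mathcal{K}^{n}$ and tracking how adding the skew-symmetric $C$ moves the relevant standard eigenvalue off the real axis or its eigenvector out of the cone.
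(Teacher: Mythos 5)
The paper does not reprove this proposition---it imports it from \cite{Bueno2021}---so there is no in-text proof to compare against; I will judge your argument on its own. The overall architecture is sound and appealing: the neutrality lemma, the shift identity $\sigma_{\mathcal{K}}(A+\lambda I)=\lambda+\sigma_{\mathcal{K}}(A)$, and the reduction of both injectivity and $\phi(I)=I$ to that single lemma are all correct, and the derivation of $\phi(I)=I$ from surjectivity plus the shift identity is clean. The problem is that your proof of the neutrality lemma breaks at its first substantive step.

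You claim that both $\max_{0\neq x\in\mathcal{K}}x^{T}Cx/\|x\|^{2}$ and $\min_{0\neq x\in\mathcal{K}}x^{T}Cx/\|x\|^{2}$ are L-eigenvalues of $C$, and deduce from $\sigma_{\mathcal{K}}(C)=\{0\}$ that $x^{T}Cx\equiv 0$ on $\mathcal{K}$, hence that $C$ is skew-symmetric. With the orientation of the complementarity condition used in this paper ($x\in\mathcal{K}$, $Cx-\lambda x\in\mathcal{K}^{*}$), that variational claim is false: by the paper's own Corollary \ref{spect-cases}, $\sigma_{\mathcal{K}}(E_{11})=\{0\}$, yet $\max_{x\in\mathcal{K},\,\|x\|=1}x^{T}E_{11}x=1/2$, attained at $\tfrac{1}{\sqrt{2}}[1\;\,1]^{T}$. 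So the maximum of the Rayleigh quotient need not be an L-eigenvalue even for a \emph{symmetric} matrix (the KKT conditions at a constrained maximizer give $Cx-\lambda x\in-\mathcal{K}^{*}$, i.e.\ the complementarity condition with the wrong sign; only the minimizer produces a genuine L-eigenvalue, and then only of the symmetric part of $C$---compare $\sigma_{\mathcal{K}}(E_{21})=\{0,1/2\}$ with $\min x^{T}E_{21}x/\|x\|^{2}=-1/2$). More to the point, the conclusion you draw is itself false: $C=E_{11}$ satisfies $\sigma_{\mathcal{K}}(C)=\{0\}$ but has nonzero symmetric part. So ``$\sigma_{\mathcal{K}}(C)=\{0\}$'' alone cannot force skew-symmetry, and everything downstream of that reduction (including the $S_{n}$ case, which relies on the same false claim about the maximum) is unsupported. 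The lemma is true, but proving it requires exploiting the neutrality hypothesis for matrices $M\neq 0$---for $n=2$ one can, for instance, apply Theorem \ref{lcs2x2} to $M+C$ for suitable diagonal $M$ with large entries to pin down the entries of $C$ one by one. A secondary issue: your exclusion of nonzero skew-symmetric $C$ is carried out only for $n=2$ and asserted to generalize ``analogously,'' whereas the proposition is stated for all $n\geq 2$.
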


An immediate consequence of Proposition \ref{bijunit} is that if $\phi
:W_{n}\rightarrow W_{n}$ is a linear map preserving the L-spectrum, then
$\phi^{-1}$ also preserves the L-spectrum.

\vspace{0.3cm}
For completeness and for  purpose of comparison with our main result, Theorem \ref{tmain1}, we next state the characterization obtained in \cite{Bueno2021} of the standard linear maps $\phi:W_{n}\rightarrow W_{n}$ that
preserve the L-spectum, when $n\geq3$.

\begin{theorem}
\cite{Bueno2021} \label{rr} Let $n\geq3$ and let $\phi:W_{n}\rightarrow W_{n}$ be a standard
map. Then, $\phi$ preserves the L-spectrum if and only if there exists an
orthogonal matrix $Q\in M_{n-1}$ such that
\[
\phi(A)=(Q\oplus\lbrack1])A(Q^{T}\oplus\lbrack1]),
\]
for all $A\in W_{n}$.

\end{theorem}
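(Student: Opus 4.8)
The plan is to prove both implications, the forward (``only if'') direction being the substantive one. For the easy direction, suppose $\phi(A)=(Q\oplus[1])A(Q^{T}\oplus[1])$ with $Q\in M_{n-1}$ orthogonal, and set $U=Q\oplus[1]$. Then $U$ is orthogonal, $U^{-1}=U^{T}=Q^{T}\oplus[1]$, and $U\mathcal{K}^{n}=\mathcal{K}^{n}$ since $\|Qx\|=\|x\|$. As $\phi(A)=UAU^{-1}$ is a similarity, $(\lambda,v)$ solves \eqref{Lorentz1} for $A$ iff $(\lambda,Uv)$ solves it for $\phi(A)$: the two membership conditions transfer because $U\mathcal{K}^{n}=\mathcal{K}^{n}$ and $(\phi(A)-\lambda I)Uv=U(A-\lambda I)v$, while the complementarity scalar is unchanged because $(Uv)^{T}(\phi(A)-\lambda I)(Uv)=v^{T}U^{T}U(A-\lambda I)v=v^{T}(A-\lambda I)v$. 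Hence $\sigma_{\mathcal{K}}(\phi(A))=\sigma_{\mathcal{K}}(A)$. For the reduction in the hard direction, assume $\phi$ is standard and preserves the L-spectrum; by Proposition \ref{bijunit}, $\phi(I)=I$, so writing $\phi(A)=PAQ$ (resp.\ $PA^{T}Q$) forces $PQ=I$ and thus $\phi(A)=PAP^{-1}$ (resp.\ $PA^{T}P^{-1}$). It then suffices to show $P$ must be $c(Q\oplus[1])$ with $c\neq0$ and $Q$ orthogonal, the scalar $c$ being irrelevant in a similarity; I carry out the similarity branch and dispose of the transpose branch at the end.

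\emph{Step 1: $P$ is a cone automorphism.} A similarity preserves the standard spectrum, and by Proposition \ref{intchar}(1) the interior L-eigenvalues are exactly the standard eigenvalues whose eigenspace meets the interior of $\mathcal{K}^{n}$. For each direction $v$ I would construct a matrix $A$ with a simple real eigenvalue $\lambda$ and eigenvector $v$, all other eigenvectors kept bounded away from $\pm\mathcal{K}^{n}$; then $\lambda\in\sigma_{\mathcal{K}}^{int}(A)$ precisely when $v\in\pm\,\mathrm{int}(\mathcal{K}^{n})$, whereas for $\phi(A)$ the eigenvector is $Pv$, so $\lambda\in\sigma_{\mathcal{K}}^{int}(\phi(A))$ precisely when $Pv\in\pm\,\mathrm{int}(\mathcal{K}^{n})$ (the sign ambiguity in the eigenvector is immaterial, cf.\ Corollary \ref{intA-A}). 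Matching these as $v$ ranges over all directions gives $P(\pm\,\mathrm{int}(\mathcal{K}^{n}))=\pm\,\mathrm{int}(\mathcal{K}^{n})$, and since $\mathrm{int}(\mathcal{K}^{n})$ is connected, continuity forces $P\mathcal{K}^{n}=\mathcal{K}^{n}$ after absorbing a sign into $P$. Thus $P=cL$ with $c>0$ and $L$ in the orthochronous Lorentz group $O(n-1,1)^{+}$.

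\emph{Step 2: complementarity eliminates the boost.} For a boundary solution with $v=(x,\|x\|)$, the vector $(A-\lambda I)v$ lies in $\mathcal{K}^{n}$ and is Euclidean-orthogonal to $v$; since two boundary vectors of the self-dual cone are orthogonal only when antipodal, $(A-\lambda I)v$ is a nonnegative multiple of $(-x,\|x\|)$. Preservation then demands $(x,\|x\|)^{T}(P^{T}P)(-x,\|x\|)^{T}=0$ for every $x$. Writing $P^{T}P=\begin{bmatrix}A_{0}&b\\ b^{T}&d\end{bmatrix}$, a direct expansion collapses this to $x^{T}(dI-A_{0})x=0$ for all $x$, whence $A_{0}=dI$: the leading $(n-1)\times(n-1)$ block of $P^{T}P$ is scalar. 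Substituting $P=cL$ and using a polar/Cartan decomposition $L=R_{1}BR_{2}$ into rotations $R_{i}\in O(n-1)\oplus[1]$ and a boost $B$, one computes that this block is scalar only when $B=I$. Hence $L\in O(n-1,1)^{+}\cap O(n)=O(n-1)\oplus[1]$, giving $P=c(Q\oplus[1])$ with $Q\in O(n-1)$ and $\phi(A)=(Q\oplus[1])A(Q^{T}\oplus[1])$.

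The main obstacle is Step 1: producing, for an arbitrary direction $v$, a matrix whose interior-L-eigenvalue data encodes exactly whether $v\in\mathrm{int}(\mathcal{K}^{n})$, and doing so uniformly enough that letting $v$ vary pins $P$ down on the whole cone. This is precisely where $n\geq3$ is used, as only then does $O(n-1)$ carry genuine rotational freedom (for $n=2$ the cone is polyhedral and the argument degenerates, accounting for the separate treatment in Theorem \ref{tmain1}). Finally, the transpose branch $\phi(A)=PA^{T}P^{-1}$ is vacuous on $S_{n}$, where $A^{T}=A$; on $M_{n}$, Steps 1--2 again force $P=c(Q\oplus[1])$, and composing the resulting map with conjugation by $(Q^{T}\oplus[1])$ shows it preserves the L-spectrum only if $A\mapsto A^{T}$ does, which a short computation rules out for $n\geq3$. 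Hence the transpose branch contributes no preservers beyond those already listed.
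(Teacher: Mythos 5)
First, a point of reference: the paper does not actually prove Theorem \ref{rr} --- it is quoted from \cite{Bueno2021} for comparison with Theorem \ref{tmain1} --- so there is no in-paper proof to measure your argument against. Judged on its own terms, your outline has a sensible architecture (sufficiency via an orthogonal cone automorphism; reduction of a standard preserver to $\phi(A)=PAP^{-1}$ using $\phi(I)=I$ from Proposition \ref{bijunit}; then showing $P$ is a cone automorphism and killing the boost), and the sufficiency direction, the reduction, and the closing algebra are fine where they are carried out: the cross terms in $(x,\|x\|)^{T}(P^{T}P)(-x,\|x\|)$ do cancel to give $x^{T}(dI-A_{0})x$, and a Cartan decomposition does show that the leading block of $L^{T}L$ is scalar only when the boost is trivial, which is genuinely where $n\geq 3$ enters.

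The gap is that in both Steps 1 and 2 you silently upgrade the set equality $\sigma_{\mathcal{K}}(A)=\sigma_{\mathcal{K}}(\phi(A))$ to a statement about individual eigenpairs. In Step 1, from $\lambda\in\sigma_{\mathcal{K}}(A)$ with interior eigenvector $v$ you conclude $\lambda\in\sigma_{\mathcal{K}}^{int}(\phi(A))$ and hence $Pv$ interior; but a priori $\lambda$ could lie in $\sigma_{\mathcal{K}}(\phi(A))$ as a \emph{boundary} L-eigenvalue. Boundary L-eigenvalues need not be standard eigenvalues (Proposition \ref{intchar}, part 2), so ``keeping the other eigenvectors of $A$ away from the cone'' controls nothing about them, and for $n\geq3$ the boundary L-spectrum of $\phi(A)$ --- a matrix you cannot see, since $P$ is unknown --- can be an infinite set. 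This is precisely the difficulty the present paper labors over in its $n=2$ analogue, Lemma \ref{key-result}, where a nontrivial argument involving $-A$, strict boundary eigenvalues, Corollary \ref{intA-A}, and Corollary \ref{non-strict-classic} is needed before one may conclude that the interior/boundary classification transfers; your sketch needs a substitute for $n\geq3$. The same slippage recurs in Step 2: ``preservation then demands $(x,\|x\|)^{T}(P^{T}P)(-x,\|x\|)=0$'' presupposes that the witness for $\lambda\in\sigma_{\mathcal{K}}(\phi(A))$ is the transported pair $(Pv,\,sP(-x,\|x\|))$, which set preservation does not supply. Two smaller points: for $W_{n}=S_{n}$ the matrix in Step 1 must be symmetric, so its remaining eigenvectors are forced to span $v^{\perp}$, which meets $\mathcal{K}^{n}$ whenever $v\notin\pm\mathcal{K}^{n}$, so the claimed construction needs care there; and the dismissal of the transpose branch (``a short computation rules out $A\mapsto A^{T}$'') is asserted rather than shown and requires an explicit witness matrix.
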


\section[L-spectrum of 2x2 matrices]{L-spectrum of $2\times2$ matrices}

\label{spectrum}

In the next theorem we present a characterization of the L-eigenvalues of
$2\times2$ matrices and then we give some related properties.

\begin{theorem}
\label{lcs2x2}Let
\begin{equation}
A=%
\begin{bmatrix}
a & b\\
c & d
\end{bmatrix}
\in M_{2}. \label{A}%
\end{equation} Then,

\begin{enumerate}
\item $a$ is an interior L-eigenvalue of $A$ if and only if $b=0$ and either
$a=d$ or $|a-d|<|c|$;

\item $\lambda\in\mathbb{R}\setminus\{a\}$ is an interior L-eigenvalue of $A$
if and only if \\$\lambda\in\left\{  \frac{a+d\pm\sqrt{(a-d)^{2}+4bc}}%
{2}\right\}\subseteq \mathbb{R}  $ and $|b|<|a-\lambda|$;

\item $\lambda$ is a boundary L-eigenvalue of $A$ if and only if one of the
following holds:

\begin{enumerate}
\item $\lambda=\frac{(a+d)+(b+c)}{2}$ and $a-d \leq c -b$,

\item $\lambda=\frac{(a+d)-(b+c)}{2}$ and $a-d\leq b-c$.
\end{enumerate}
\end{enumerate}
\end{theorem}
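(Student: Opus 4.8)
The plan is to apply Proposition \ref{intchar}, specialized to $n = 2$, where the Lorentz cone is $\mathcal{K}^2 = \{(x_1, x_2) : |x_1| \le x_2\}$, so that its interior consists of the vectors with $|x_1| < x_2$ and its boundary of the vectors with $|x_1| = x_2 \ge 0$. Parts 1 and 2, concerning interior L-eigenvalues, will follow from the first part of Proposition \ref{intchar}: $\lambda$ is an interior L-eigenvalue exactly when it is a (necessarily real) standard eigenvalue of $A$ whose eigenspace meets the interior of $\mathcal{K}^2$. Part 3, concerning boundary L-eigenvalues, will follow from the second part of Proposition \ref{intchar}, which for $n = 2$ forces the unit vector $x$ to be $\pm 1$.

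For the interior cases I would start from the characteristic polynomial $(a - \lambda)(d - \lambda) - bc$, whose roots $\frac{a + d \pm \sqrt{(a-d)^2 + 4bc}}{2}$ are the candidate interior L-eigenvalues and are real precisely when the set displayed in part 2 is contained in $\mathbb{R}$. For a root $\lambda \ne a$ (part 2), the $(1,1)$ entry $a - \lambda$ of $A - \lambda I$ is nonzero, so the one-dimensional eigenspace is spanned by a vector with nonzero second coordinate; normalizing it to $(x_1, 1)^T$ gives $x_1 = b/(\lambda - a)$, and the interior condition $|x_1| < 1$ becomes $|b| < |a - \lambda|$. One then checks, using the characteristic equation, that this vector is genuinely an eigenvector, which settles both implications. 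Part 1 isolates $\lambda = a$, where this normalization can fail: here $a$ is a standard eigenvalue iff $bc = 0$, and inspecting $(A - aI)x = 0$ directly shows that if $b \ne 0$ (hence $c = 0$) every eigenvector has zero second coordinate and thus lies outside $\mathcal{K}^2$, while if $b = 0$ the eigenvector lies in the interior exactly when $a = d$ or, for $c \ne 0$, when $|a - d| < |c|$.

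For part 3 I would substitute $x = 1$ and $x = -1$ into the identity $(A - \lambda I)(x,1)^T = s(-x,1)^T$. Each choice gives two scalar equations; eliminating $s$ by combining them yields $\lambda = \frac{(a+d)+(b+c)}{2}$ for $x = 1$ and $\lambda = \frac{(a+d)-(b+c)}{2}$ for $x = -1$, while back-substitution recovers the corresponding $s$, whose nonnegativity is equivalent to $a - d \le c - b$ and $a - d \le b - c$, respectively. I expect the only delicate step to be the case analysis in part 1, the sole place where the uniform normalization $x_2 = 1$ breaks down and the degenerate configurations governed by the vanishing of $b$, of $c$, and of $a - d$ must each be examined separately in order to decide membership in the interior of the cone.
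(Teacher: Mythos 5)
Your proposal is correct and follows essentially the same route as the paper: Proposition \ref{intchar} reduces the interior case to solving $(A-\lambda I)[x\ 1]^T=0$ with $|x|<1$ (equivalently, a real standard eigenvalue with eigenvector in the interior of the cone) and the boundary case to the system obtained from $x=\pm1$, with $s\geq0$ yielding the inequalities. The paper merely states that Conditions 1 and 2 ``follow immediately'' from the normalized system, whereas you spell out the degenerate case $\lambda=a$ and the discriminant/normalization details explicitly; the substance is identical.
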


\begin{proof}
 Conditions 1 and 2 follow immediately from the fact that, by Proposition \ref{intchar}, $\lambda$ is an
interior L-eigenvalue of $A$ if and only if there is some $x\in\mathbb{R},$
with $|x|<1,$ such that
\begin{equation}
0=(A-\lambda I)\left[
\begin{array}
[c]{c}%
x\\
1
\end{array}
\right]  =\left[
\begin{array}
[c]{c}%
(a-\lambda)x+b\\
cx+(d-\lambda)
\end{array}
\right]  . \label{k}%
\end{equation}
Now we show Condition 3. By Proposition \ref{intchar}, we have that $\lambda$
is a boundary L-eigenvalue of $A$ if and only if there is some $s\geq0$ and
$x\in\{-1,1\}$ such that
\[
\left[
\begin{array}
[c]{cc}%
a-\lambda & b\\
c & d-\lambda
\end{array}
\right]  \left[
\begin{array}
[c]{c}%
x\\
1
\end{array}
\right]  =s\left[
\begin{array}
[c]{c}%
-x\\
1
\end{array}
\right]  \ \Leftrightarrow\ \left[
\begin{array}
[c]{c}%
(a-\lambda+s)x+b\\
cx+(d-\lambda-s)
\end{array}
\right]  =0.
\]
When $x=1$, this is equivalent to
\[
\
\begin{array}
[c]{c}%
\ \left\{
\begin{array}
[c]{c}%
\lambda=a+b+s\\
\lambda=c+d-s
\end{array}
\right.  \text{ for some }s\geq0,\\
\end{array}
\]
that is, 
\[
\lambda=\frac{a+b+c+d}{2}\quad\text{ and }\quad a-d\leq c-b.
\]
When $x=-1$, we get
\[
\
\begin{array}
[c]{c}%
\left\{
\begin{array}
[c]{c}%
\lambda=a-b+s\\
\lambda=d-c-s
\end{array}
\right.  \text{ for some }s\geq0,
\end{array}
\]
that is,
\[
\lambda=\frac{a+d-b-c}{2}\quad\text{ and }\quad a-d\leq b-c.
\]

\end{proof}

Based on the characterization of the boundary L-eigenvalues of a matrix in
$M_{2}$ given in Theorem \ref{lcs2x2}, we introduce the following definitions.

\begin{definition}
Let $A\in M_{2}$. We say that $\lambda$ is a \emph{type }$+$\emph{ boundary L-eigenvalue} of $A$
(resp. a \emph{type }$-$\emph{ boundary L-eigenvalue} of $A$) if Condition 3a
(resp. Condition 3b) in Theorem \ref{lcs2x2} holds.

Moreover, we say that a boundary L-eigenvalue $\lambda$ of $A$ is
\emph{strict} if $\lambda$ is of type $+$ and $a-d<c-b$, or if $\lambda$ is of
type $-$ and $a-d<b-c$. If $\lambda$ is a boundary L-eigenvalue of both type
$+$ and type $-,$ then $\lambda$ is strict if at least one of the previous
strict inequalities holds.
\end{definition}

We next present some immediate consequences of Theorem \ref{lcs2x2}. We first
introduce two useful concepts.

\begin{definition}
Let $A\in M_{2}$ be as in (\ref{A}). The \emph{trace} of $A,$ denoted by
$\operatorname*{tr}(A),$ is the sum of the diagonal entries of $A,$ that is,
$\operatorname*{tr}(A)=a+d.$ The \emph{anti-trace} of $A,$ denoted by
$\operatorname*{antitr}(A),$ is the sum of the anti-diagonal entries of $A,$
that is, $\operatorname*{antitr}(A)=b+c.$
\end{definition}

\begin{corollary}
\label{2bd} Let $A\in M_{2}$. If $A$ has a type $+$ boundary L-eigenvalue
$\lambda_{1}$ and a type $-$ boundary L-eigenvalue $\lambda_{2}$, then

\begin{enumerate}
\item $\lambda_{1}+\lambda_{2}=\operatorname*{tr}(A).$

\item $|\lambda_{1}-\lambda_{2}|=|\operatorname*{antitr}(A)|$.
\end{enumerate}
\end{corollary}

\begin{corollary}
\label{non-strict-classic} Let $A\in M_{2}$ be as in (\ref{A}) and let $\lambda$ be a boundary L-eigenvalue of $A$. Then, 
$\lambda$ is a standard eigenvalue of $A$ if and only if $A$ has a non-strict boundary L-eigenvalue. 
\end{corollary}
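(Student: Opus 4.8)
The plan is to show that the two conditions in the statement --- that $\lambda$ is a standard eigenvalue of $A$, and that $A$ has a non-strict boundary L-eigenvalue --- are each equivalent to the single scalar identity $(a-d)^2=(b-c)^2$, and then to chain the two equivalences. Writing $A$ as in \eqref{A}, recall from Theorem \ref{lcs2x2} that every boundary L-eigenvalue of $A$ equals either $\frac{(a+d)+(b+c)}{2}$ (type $+$) or $\frac{(a+d)-(b+c)}{2}$ (type $-$), while the standard eigenvalues of $A$ are $\frac{(a+d)\pm\sqrt{(a-d)^2+4bc}}{2}$.

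First I would establish that a boundary L-eigenvalue $\lambda$ is a standard eigenvalue if and only if $(a-d)^2=(b-c)^2$. The cleanest route is to substitute the type $+$ value into $\det(A-\lambda I)$ and simplify; a short computation gives $\det(A-\lambda I)=\tfrac14\big((c-b)^2-(a-d)^2\big)$, and the identical value arises from the type $-$ expression. Hence $\det(A-\lambda I)=0$ --- that is, $\lambda$ is a standard eigenvalue --- exactly when $(a-d)^2=(b-c)^2$. (Equivalently, matching $\frac{(a+d)\pm(b+c)}{2}$ against the standard eigenvalue formula, the two agree iff $(a-d)^2+4bc=(b+c)^2$, which rearranges to $(a-d)^2=(b-c)^2$; the same identity forces $(a-d)^2+4bc=(b+c)^2\geq 0$, so the standard eigenvalues are automatically real and no separate discriminant check is needed.) Note that this characterization does not depend on which boundary L-eigenvalue $\lambda$ one starts with.

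Next I would show that $A$ has a non-strict boundary L-eigenvalue if and only if $(a-d)^2=(b-c)^2$. When $b+c\neq 0$ the type $+$ and type $-$ values are distinct, so each is of a single type, and by the definition of strictness a type $+$ boundary L-eigenvalue is non-strict exactly when $a-d=c-b$ and a type $-$ one exactly when $a-d=b-c$; either equality yields $(a-d)^2=(b-c)^2$. Conversely, $(a-d)^2=(b-c)^2$ forces $a-d=c-b$ or $a-d=b-c$: in the first case the existence condition $a-d\leq c-b$ of Condition 3a in Theorem \ref{lcs2x2} holds with equality, so the type $+$ value is a non-strict boundary L-eigenvalue, and in the second case Condition 3b holds with equality, giving a non-strict type $-$ boundary L-eigenvalue. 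Combining the two equivalences then proves the corollary in this case.

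The step I expect to require the most care is the degenerate case $b+c=0$, in which the type $+$ and type $-$ values coincide, so the single boundary L-eigenvalue $\frac{a+d}{2}$ can be of both types at once. Here the both-type clause in the definition of strictness must be invoked, and one must verify that the equality responsible for non-strictness ($a-d=c-b$ or $a-d=b-c$) is actually the tight one --- equivalently, that the parameter $s$ in Proposition \ref{intchar}(2) can be taken equal to $0$ --- rather than merely the companion strict inequality holding. Pinning down, in this coincident case, exactly which existence inequality is an equality is the only place where the argument is not purely formal, and it is where I would concentrate the effort to make the equivalence with $(a-d)^2=(b-c)^2$ watertight.
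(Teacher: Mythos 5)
Your core step is exactly the paper's: substitute the type $+$ or type $-$ value of $\lambda$ into $\det(A-\lambda I)$, obtain $\tfrac14\bigl((b-c)^{2}-(a-d)^{2}\bigr)$, and conclude that $\lambda$ is a standard eigenvalue iff $|a-d|=|b-c|$. The paper stops there with ``thus the claim follows,'' leaving the second equivalence --- between $|a-d|=|b-c|$ and the existence of a non-strict boundary L-eigenvalue --- entirely implicit, whereas you attempt to prove it and correctly single out the coincident case $b+c=0$ as the delicate spot. That worry is justified, and it is more than bookkeeping: under the paper's literal definition (a both-type boundary L-eigenvalue is strict as soon as \emph{one} of the two inequalities is strict), the matrix with $a=0$, $b=1$, $c=-1$, $d=2$ has $\lambda=1$ as its unique boundary L-eigenvalue, of both types and strict because $a-d=-2<2=b-c$, yet $\det(A-I)=0$; so the equivalence ``non-strict boundary L-eigenvalue exists iff $(a-d)^{2}=(b-c)^{2}$'' fails there as you have set it up. It survives only if ``$A$ has a non-strict boundary L-eigenvalue'' is read as ``for some type, the existence inequality of Theorem \ref{lcs2x2} holds with equality,'' rather than as the negation of strictness; under that reading your case analysis closes and the paper's one-line conclusion is doing the same silent work. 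In short: same method as the paper, and the one step you flagged but did not finish is a genuine gap --- one the paper's own proof also leaves unaddressed.
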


\begin{proof}
By Theorem \ref{lcs2x2}, if $\lambda$ is a
type $+$ boundary L-eigenvalue of $A$, then
\[
\lambda=\frac{a+d+b+c}{2}\quad\text{ and }\quad a-d\leq c-b,
\]
and if $\lambda$ is a type $-$ boundary L-eigenvalue of $A$, then
\[
\lambda=\frac{a+d-b-c}{2}\quad\text{ and }\quad a-d\leq b-c.
\]
An elementary calculation shows that, in any case,
\[
\det(A-\lambda I)=\frac{1}{4}\left(  (b-c)^{2}%
-(a-d)^{2}\right),
\]
which is zero if and only if $|a-d|=|b-c|.$ Thus, the  claim follows.
\end{proof}

The next result says that if we change the signs of both $b$ and $c$ in a matrix $A$ as in \eqref{A}, then the interior and the boundary L-eigenvalues of $A$ get preserved. 
\begin{corollary}
\label{rr1} Let $A\in M_{2}$ and $B=TAT$, where
\begin{equation}
T=\left[  -1\right]  \oplus\left[  1\right]  . \label{P}%
\end{equation}
Then $A$ and $B$ have the same L-spectrum. Moreover, we have $\sigma
_{\mathcal{K}}^{int}(A)=\sigma_{\mathcal{K}}^{int}(B)$ and
$\sigma_{\mathcal{K}}^{bd}(A)=\sigma_{\mathcal{K}}^{bd}(B).$
Additionally, $\lambda$ is a type $+$ boundary L-eigenvalue of $A$ if and only
if $\lambda$ is a type $-$ boundary L-eigenvalue of $B.$
\end{corollary}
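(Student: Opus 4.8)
The plan is to compute $B = TAT$ explicitly and then apply the characterizations from Theorem \ref{lcs2x2} directly. Writing $A$ as in \eqref{A}, the conjugation $B = TAT$ with $T = [-1]\oplus[1]$ produces
\[
B = \begin{bmatrix} a & -b \\ -c & d \end{bmatrix}.
\]
So the effect of $T$ is to flip the signs of the two off-diagonal entries $b$ and $c$ while leaving the diagonal $a,d$ fixed. Every claim should then reduce to observing how the conditions in Theorem \ref{lcs2x2} transform under $b \mapsto -b$, $c \mapsto -c$.

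First I would verify the interior L-eigenvalue statement $\sigma_{\mathcal{K}}^{int}(A)=\sigma_{\mathcal{K}}^{int}(B)$. Condition 1 of Theorem \ref{lcs2x2} requires $b=0$, which is invariant under $b\mapsto -b$, and the remaining condition involves only $a,d,c$ through $|a-d|<|c|$, which is invariant under $c \mapsto -c$. For Condition 2, the candidate eigenvalues are $\frac{a+d\pm\sqrt{(a-d)^2+4bc}}{2}$; since the product $bc$ is unchanged when both signs flip, the discriminant and hence the eigenvalue set is identical, and the extra inequality $|b|<|a-\lambda|$ depends only on $|b|$, which is invariant. Thus the interior L-eigenvalues of $A$ and $B$ coincide.

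Next I would handle the boundary L-eigenvalues and the type-swapping claim together, since they come from the same computation. Under $b\mapsto -b$, $c\mapsto -c$, the type $+$ value for $A$, namely $\frac{(a+d)+(b+c)}{2}$ with condition $a-d\le c-b$, becomes for $B$ the value $\frac{(a+d)-(b+c)}{2}$ with condition $a-d \le -c+b = b-c$ — which is exactly the defining data of a type $-$ boundary L-eigenvalue. Symmetrically, a type $-$ eigenvalue of $A$ maps to a type $+$ eigenvalue of $B$. This simultaneously shows that $\lambda$ is a type $+$ boundary L-eigenvalue of $A$ if and only if it is a type $-$ boundary L-eigenvalue of $B$ (and vice versa), and that the \emph{set} of boundary L-eigenvalues is preserved, giving $\sigma_{\mathcal{K}}^{bd}(A)=\sigma_{\mathcal{K}}^{bd}(B)$. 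Combining the interior and boundary equalities yields $\sigma_{\mathcal{K}}(A)=\sigma_{\mathcal{K}}(B)$, and the refined statements about $\sigma_{\mathcal{K}}^{int}$ and $\sigma_{\mathcal{K}}^{bd}$ follow directly.

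I do not anticipate a genuine obstacle here: the proof is essentially a symmetry argument, and the only point requiring mild care is bookkeeping the inequalities in Condition 3 so that the type $+$/type $-$ interchange is stated in the correct direction. The key structural observation that makes everything transparent is that $T$ preserves the diagonal and the product $bc$ while negating $b$ and $c$ individually, so one should foreground that computation of $B$ at the very start and then read off each consequence from Theorem \ref{lcs2x2}.
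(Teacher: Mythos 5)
Your proof is correct: computing $B=TAT=\begin{bmatrix} a & -b\\ -c & d\end{bmatrix}$ and reading off the invariance of Conditions 1--3 of Theorem \ref{lcs2x2} under $b\mapsto -b$, $c\mapsto -c$ (with the type $+$/type $-$ data interchanged) is exactly the argument the paper intends, which is why it states the result as an immediate corollary without a written proof. Nothing is missing.
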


By using Theorem \ref{lcs2x2}, we next give the explicit L-spectrum of
the  matrices in a basis of  $M_2$ and $S_2$, which  will be  used in the characterization of the
linear maps preserving the L-spectrum. In each case, the L-spectrum is
presented as the union of two sets, namely, $\sigma_{\mathcal{K}}^{int}%
(A)\cup\sigma_{\mathcal{K}}^{bd}(A)$. Here and throughout, for $i,j\in
\{1,2\}$, $E_{ij}$ denotes the $2\times2$ matrix with all entries $0$ except
the one in position $(i,j)$ which is $1.$

\begin{corollary}
\label{spect-cases} We have
\begin{itemize}
\item $\sigma_{\mathcal{K}}(E_{11})=\{0\}\cup\emptyset$

\item $\sigma_{\mathcal{K}}(E_{21})=\{0\}\cup\{1/2\}$

\item $\sigma_{\mathcal{K}}(E_{22})=\{1\}\cup\{1/2\}$

\item $\sigma_{\mathcal{K}}(E_{12}+E_{21})=\emptyset\cup\{-1,1\}$
\end{itemize}
\end{corollary}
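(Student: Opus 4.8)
The plan is to apply Theorem \ref{lcs2x2} directly to each of the four matrices, handling the interior and the boundary parts of the L-spectrum separately. For a matrix $A=\begin{bmatrix} a & b\\ c & d\end{bmatrix}$ I would read off the entries $a,b,c,d$ and then run through the three conditions of the theorem in turn, recording $\sigma_{\mathcal{K}}^{int}(A)$ from conditions 1 and 2 and $\sigma_{\mathcal{K}}^{bd}(A)$ from condition 3.

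For the interior L-eigenvalues, since every interior L-eigenvalue is in particular a standard eigenvalue, the only candidates are the roots of the characteristic polynomial. The first three matrices all have $bc=0$, so the discriminant $(a-d)^{2}+4bc$ reduces to $(a-d)^{2}$ and the standard eigenvalues are simply $a$ and $d$; the matrix $E_{12}+E_{21}$ has $a=d=0$ and $bc=1$, giving standard eigenvalues $\pm1$. For the diagonal value $\lambda=a$ I would apply condition 1 (requiring $b=0$ together with either $a=d$ or $|a-d|<|c|$), and for any remaining standard eigenvalue $\lambda\neq a$ I would apply condition 2 (requiring the strict inequality $|b|<|a-\lambda|$). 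For the boundary L-eigenvalues I would compute the two candidate values $\frac{(a+d)+(b+c)}{2}$ and $\frac{(a+d)-(b+c)}{2}$ and test the accompanying inequalities $a-d\leq c-b$ (type $+$) and $a-d\leq b-c$ (type $-$), noting that a given candidate may fail one test and pass the other, or pass both, as happens for $E_{22}$, where both types collapse to $1/2$.

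Since the verification is a finite sequence of arithmetic checks, there is no genuine obstacle and the computation is entirely routine. The one point that deserves care is $E_{12}+E_{21}$: its standard eigenvalues $\pm1$ satisfy $|b|=|a-\lambda|$ rather than the strict inequality $|b|<|a-\lambda|$, so condition 2 excludes them from the interior spectrum, yet both reappear as boundary L-eigenvalues. This borderline case is precisely where the interior and boundary descriptions meet, and is the only place where strict versus non-strict inequalities must be tracked carefully. Assembling the four computations then yields the stated L-spectra.
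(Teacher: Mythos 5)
Your proposal is correct and matches the paper's (unwritten) argument exactly: the corollary is stated as a direct application of Theorem \ref{lcs2x2}, and your case-by-case verification of conditions 1--3 for each of the four matrices, including the observation that $\pm 1$ fail the strict inequality $|b|<|a-\lambda|$ for $E_{12}+E_{21}$ and so appear only as boundary L-eigenvalues, is precisely the intended computation. All four resulting spectra agree with the statement.
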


\section{Images of matrices in a basis of $W_{2}$ under an L-spectrum
preserver}

\label{basis}

Let us consider a linear map $\phi:W_{2}\rightarrow W_{2}$ preserving the
L-spectrum, with $W_2 \in \{M_2, S_2\}$. In this section we obtain a generic form that $\phi(A)$ should
have when $A$ is a matrix in a specific basis of $W_{2},$ namely, the
basis $\{E_{11},E_{22},E_{12}+E_{21}\}$ if $W_{2}=S_{2}$, and the basis
$\{E_{11},E_{22},E_{21},E_{12}+E_{21}\}$ if $W_{2}=M_{2}.$ For $E_{12}%
+E_{21},$ the possible images under $\phi$ are exactly determined.

We begin with a result which shows that under certain conditions, a linear
preserver of the L-spectrum preserves the interior and boundary L-eigenvalues.
This will be key in proving the remaining results.

\begin{lemma}
\label{key-result} Let $\phi:W_{2}\rightarrow W_{2}$ be a linear map that
preserves the L-spectrum. If $A\in W_{2}$ has two distinct strict
boundary L-eigenvalues,
then
\begin{equation}
\sigma_{\mathcal{K}}^{int}(A)=\sigma_{\mathcal{K}}^{int}(\phi(A))\neq
\emptyset\quad\text{ and }\quad\sigma_{\mathcal{K}}^{bd}(A)=\sigma
_{\mathcal{K}}^{bd}(\phi(A)). \label{ds}%
\end{equation}

\end{lemma}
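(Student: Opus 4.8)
The plan is to reduce everything to the behaviour of $\phi$ on $-A$, exploiting Corollary~\ref{intA-A}, which turns statements about interior L-eigenvalues of a matrix into statements about interior L-eigenvalues of its negative. The crucial observation is that the hypothesis on $A$ forces $-A$ to have \emph{no} boundary L-eigenvalues whatsoever, so that the entire L-spectrum of $-A$ is interior. This is exactly what lets the argument conclude by elementary set-theoretic bookkeeping, with no need to compute the images $\phi(A)$ explicitly.

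First I would record the hypothesis concretely. Writing $A=\begin{bmatrix} a & b\\ c & d\end{bmatrix}$, having two distinct strict boundary L-eigenvalues means, by Theorem~\ref{lcs2x2}(3), that $A$ has a type $+$ boundary L-eigenvalue $\lambda_1$ and a type $-$ boundary L-eigenvalue $\lambda_2$ with $\lambda_1\neq\lambda_2$, and that both strict inequalities $a-d<c-b$ and $a-d<b-c$ hold. Applying Theorem~\ref{lcs2x2}(3) to $-A$ (whose entries are $-a,-b,-c,-d$), the existence of a type $+$ (resp.\ type $-$) boundary L-eigenvalue of $-A$ would require $a-d\geq c-b$ (resp.\ $a-d\geq b-c$), each excluded by strictness. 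Hence $\sigma_{\mathcal{K}}^{bd}(-A)=\emptyset$ and $\sigma_{\mathcal{K}}(-A)=\sigma_{\mathcal{K}}^{int}(-A)$. Since the L-spectrum is never empty (Corollary~2.1 of \cite{Seeger1}), this set is nonempty, and Corollary~\ref{intA-A} then gives $\sigma_{\mathcal{K}}^{int}(A)=-\sigma_{\mathcal{K}}^{int}(-A)\neq\emptyset$, which already delivers the nonemptiness assertion in \eqref{ds}.

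Next I would transport this through $\phi$. Put $B=\phi(A)$; by linearity $\phi(-A)=-B$, so L-spectrum preservation yields $\sigma_{\mathcal{K}}(-B)=\sigma_{\mathcal{K}}(-A)=\sigma_{\mathcal{K}}^{int}(-A)$. In particular $\sigma_{\mathcal{K}}^{int}(-B)\subseteq\sigma_{\mathcal{K}}(-B)=\sigma_{\mathcal{K}}^{int}(-A)$, and applying Corollary~\ref{intA-A} to $B$ gives $\sigma_{\mathcal{K}}^{int}(B)=-\sigma_{\mathcal{K}}^{int}(-B)\subseteq-\sigma_{\mathcal{K}}^{int}(-A)=\sigma_{\mathcal{K}}^{int}(A)$, one of the two inclusions I need. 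To finish I would observe that $\lambda_1,\lambda_2$ can be interior L-eigenvalues of neither $A$ nor $B$: being strict boundary L-eigenvalues, they are not standard eigenvalues of $A$ by Corollary~\ref{non-strict-classic}, hence not interior, so $\lambda_1,\lambda_2\notin\sigma_{\mathcal{K}}^{int}(A)\supseteq\sigma_{\mathcal{K}}^{int}(B)$. Since $\lambda_1,\lambda_2\in\sigma_{\mathcal{K}}(B)=\sigma_{\mathcal{K}}(A)$ are not interior, they lie in $\sigma_{\mathcal{K}}^{bd}(B)$; as a $2\times2$ matrix has at most one boundary L-eigenvalue of each type and $\lambda_1\neq\lambda_2$, this forces $\sigma_{\mathcal{K}}^{bd}(B)=\{\lambda_1,\lambda_2\}=\sigma_{\mathcal{K}}^{bd}(A)$.

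The remaining interior equality is then pure bookkeeping: from $\sigma_{\mathcal{K}}(B)=\sigma_{\mathcal{K}}^{int}(B)\cup\{\lambda_1,\lambda_2\}$ and $\sigma_{\mathcal{K}}(B)=\sigma_{\mathcal{K}}(A)=\sigma_{\mathcal{K}}^{int}(A)\cup\{\lambda_1,\lambda_2\}$, and using that $\{\lambda_1,\lambda_2\}$ is disjoint from both interior sets, I can cancel $\{\lambda_1,\lambda_2\}$ to get $\sigma_{\mathcal{K}}^{int}(B)=\sigma_{\mathcal{K}}^{int}(A)$, completing \eqref{ds}. The only genuine obstacle is the conceptual one at the outset: recognizing that passing to $-A$ is the right move, and that strictness of \emph{both} boundary L-eigenvalues is precisely what annihilates the boundary spectrum of $-A$. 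Once that is in hand, Corollary~\ref{intA-A} together with the disjointness of the strict boundary eigenvalues from the interior spectra reduces the proof to elementary set manipulation.
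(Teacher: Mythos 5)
Your proposal is correct and follows essentially the same route as the paper's proof: both hinge on observing that strictness of the two boundary L-eigenvalues forces $\sigma_{\mathcal{K}}^{bd}(-A)=\emptyset$, then use Corollary~\ref{intA-A} to transfer interior spectra between $\pm A$ and $\pm\phi(A)$, Corollary~\ref{non-strict-classic} to see that $\lambda_1,\lambda_2$ are not standard (hence not interior) eigenvalues, and finish by set-theoretic bookkeeping. The only difference is cosmetic ordering: you first derive the inclusion $\sigma_{\mathcal{K}}^{int}(\phi(A))\subseteq\sigma_{\mathcal{K}}^{int}(A)$ and then place $\lambda_1,\lambda_2$ on the boundary of $\phi(A)$, whereas the paper argues the latter directly by contradiction.
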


\begin{proof}
Let $A$ be as in (\ref{A}). Since $A$ has two distinct strict boundary
L-eigenvalues, say $\lambda_1$ and $\lambda_2$, by Theorem \ref{lcs2x2} we have $a-d<c-b$ and $a-d<b-c$. This
implies that $-A$ does not have any boundary L-eigenvalues and, consequently,
has at least one interior L-eigenvalue since every matrix has a nonempty
L-spectrum. Hence, we have
\[
\sigma_{\mathcal{K}}^{bd}(A)=\{\lambda_{1},\lambda_{2}\},\quad\sigma
_{\mathcal{K}}^{int}(-A)\neq\emptyset,\quad\text{and}\quad\sigma_{\mathcal{K}%
}^{bd}(-A)=\emptyset.
\]
Taking into account Corollary \ref{intA-A} and the fact that, by Corollary
\ref{non-strict-classic}, $\lambda_{1}$ and $\lambda_{2}$ are not standard
eigenvalues of $A$, we have
\[
\sigma_{\mathcal{K}}^{int}(A)=-\sigma_{\mathcal{K}}^{int}(-A),\quad
\sigma_{\mathcal{K}}^{int}(A)\neq\emptyset,\quad\text{and}\quad
\sigma_{\mathcal{K}}^{int}(A)\cap\{\lambda_{1},\lambda_{2}\}=\emptyset.
\]
Since $\phi$ preserves the L-spectrum, for $i\in \{1,2\}$ we should have $\lambda
_{i}\in\sigma_{\mathcal{K}}^{bd}(\phi(A)),$ as otherwise $\lambda_{i}\in
\sigma_{\mathcal{K}}^{int}(\phi(A))$, which implies, by Corollary \ref{intA-A}, that  $-\lambda_{i}%
\in\sigma_{\mathcal{K}}^{int}(\phi(-A)),$ a contradiction since $-\lambda_{i}$
is not an L-eigenvalue of $-A.$ Then, since $\phi(A)$ has two boundary
L-eigenvalues, which are the boundary L-eigenvalues of $A,$ it follows that
the interior L-eigenvalues of $A$ are also interior L-eigenvalues of
$\phi(A)$.
\end{proof}

Before we fulfill the main purpose of this section, we state a simple
consequence of Lemma \ref{key-result} that will be used in the proof of
Theorem \ref{tmain1} in the next section.

\begin{lemma}
\label{singular} Let $\phi:W_{2}\rightarrow W_{2}$ be a linear map that
preserves the L-spectrum. Then, $\phi(E_{11}+E_{21})$ is singular.
\end{lemma}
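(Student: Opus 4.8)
The plan is to reduce the statement to showing that $\det\phi(E_{11}+E_{21})=0$, i.e.\ that $0$ is a standard eigenvalue of $\phi(E_{11}+E_{21})$. A direct attack is blocked by two facts. First, a computation with Theorem \ref{lcs2x2} applied to $E_{11}+E_{21}$ (entries $a=1$, $b=0$, $c=1$, $d=0$ in the notation of \eqref{A}) gives $\sigma_{\mathcal{K}}(E_{11}+E_{21})=\{0\}\cup\{1\}$, with $0$ an \emph{interior} and $1$ a \emph{boundary} L-eigenvalue, and the latter is non-strict; so $E_{11}+E_{21}$ has only a single boundary L-eigenvalue and Lemma \ref{key-result} cannot be applied to it directly. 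Second, the L-spectrum $\{0,1\}$ alone does not force singularity (one can exhibit nonsingular matrices with this very L-spectrum), so the conclusion must come from the map structure. The idea is therefore to embed $E_{11}+E_{21}$ into a one-parameter family to which Lemma \ref{key-result} \emph{does} apply, and then to transfer the information back by an algebraic continuity argument.

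First I would introduce, for a real parameter $t$, the matrix $A_{t}=-(E_{11}+E_{21})+tE_{21}$, whose entries are $a=-1,\ b=0,\ c=t-1,\ d=0$. Using Theorem \ref{lcs2x2} I would check that, for $t\in(0,2)$ with $t\neq 1$, the matrix $A_{t}$ has exactly the two boundary L-eigenvalues $\tfrac{t-2}{2}$ (type $+$, since $a-d=-1<t-1=c-b$) and $-\tfrac{t}{2}$ (type $-$, since $a-d=-1<1-t=b-c$); both are \emph{strict}, and they are distinct precisely because $t\neq 1$. I would also verify that $0\in\sigma_{\mathcal{K}}^{int}(A_{t})$: the characteristic polynomial is $\lambda^{2}+\lambda=\lambda(\lambda+1)$, so $0$ is a standard eigenvalue with $0\neq a$, and $|b|=0<|a-0|=1$, which is exactly Condition~2 of Theorem \ref{lcs2x2}.

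Next, since $A_{t}$ has two distinct strict boundary L-eigenvalues, Lemma \ref{key-result} yields $\sigma_{\mathcal{K}}^{int}(\phi(A_{t}))=\sigma_{\mathcal{K}}^{int}(A_{t})$, hence $0\in\sigma_{\mathcal{K}}^{int}(\phi(A_{t}))$ for every such $t$. By Proposition \ref{intchar}, an interior L-eigenvalue is a standard eigenvalue, so $0$ is a standard eigenvalue of $\phi(A_{t})$, that is, $\det\phi(A_{t})=0$ for all $t\in(0,2)\setminus\{1\}$. Now the key transfer step: because $\phi$ is linear, $\phi(A_{t})=-\phi(E_{11}+E_{21})+t\,\phi(E_{21})$ depends affinely on $t$, so $t\mapsto\det\phi(A_{t})$ is a polynomial of degree at most $2$. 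Vanishing on the infinite set $(0,2)\setminus\{1\}$, it must be identically zero; evaluating at $t=0$ and using $\det(-M)=\det M$ for $2\times2$ matrices gives $\det\phi(E_{11}+E_{21})=\det\phi(A_{0})=0$, so $\phi(E_{11}+E_{21})$ is singular.

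The part requiring the most care is the construction of the family $A_{t}$, which is the real content of the argument. The perturbation $tE_{21}$ must be chosen so that it simultaneously (i) turns the lone non-strict boundary L-eigenvalue of $-(E_{11}+E_{21})$ into two \emph{distinct strict} boundary L-eigenvalues, so that Lemma \ref{key-result} becomes applicable, while (ii) preserving $0$ as an interior L-eigenvalue, so that the forced determinant condition concerns $0$. Checking (i) and (ii) together pins down the admissible range $t\in(0,2)\setminus\{1\}$, and it is the polynomiality of $\det\phi(A_{t})$ that lets one pass from this open family to the unperturbed matrix at $t=0$, which is where Lemma \ref{key-result} itself could not be used.
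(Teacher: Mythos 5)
Your proof is correct and takes essentially the same route as the paper's: the paper perturbs to $A_{\varepsilon}=(-1-\varepsilon)E_{11}-E_{21}$ for $\varepsilon>0$, applies Lemma \ref{key-result} to conclude that $0$ is an interior (hence standard) eigenvalue of $\phi(A_{\varepsilon})$, and finishes by continuity as $\varepsilon\to 0$. Your only deviations --- perturbing in the $E_{21}$ direction rather than the $E_{11}$ direction, and replacing the continuity step by the vanishing of the degree-at-most-$2$ polynomial $t\mapsto\det\phi(A_{t})$ on an infinite set --- are cosmetic, and your verifications (strictness and distinctness of the boundary L-eigenvalues for $t\in(0,2)\setminus\{1\}$, and $0\in\sigma_{\mathcal{K}}^{int}(A_{t})$) all check out.
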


\begin{proof}
Let $\varepsilon>0\ $and $A_{\varepsilon}:=(-1-\varepsilon)E_{11}-E_{21}.$ The
matrix $A_{\varepsilon}$ has two distinct strict boundary L-eigenvalues,
implying, by Lemma \ref{key-result}, that $\phi(A_{\varepsilon})$ has the same
interior L-eigenvalues as $A_{\varepsilon}.$ Since $0$ is an interior L-eigenvalue of
$A_{\varepsilon}$, $\phi(A_{\varepsilon})$ is singular. By continuity,
$\phi(-E_{11}-E_{21})$ is singular, and hence, so is $\phi(E_{11}+E_{21})$.
\end{proof}

\subsection{Necessary forms for the images of a basis}

\begin{lemma}
\label{E22-E11} Let $\phi:W_{2}\rightarrow W_{2}$ be a linear map that
preserves the L-spectrum. Then,
\[
\phi(E_{11})=%
\begin{bmatrix}
1-a & \mp\sqrt{a^{2}-a}\\
\pm\sqrt{a^{2}-a} & a
\end{bmatrix}
,\quad\phi(E_{22})=%
\begin{bmatrix}
a & \pm\sqrt{a^{2}-a}\\
\mp\sqrt{a^{2}-a} & 1-a
\end{bmatrix}
\]
for some $a\leq0,$ and
\[
\phi(E_{12}+E_{21})=%
\begin{bmatrix}
m & r\\
-r \pm2 & -m
\end{bmatrix},
\]
for some $m,r\in\mathbb{R}.$ In particular, if $W_{2}=S_{2},$ then%
\[
\phi(E_{11})=E_{11},\quad\phi(E_{22})=E_{22},
\]
and
\[
\phi(E_{12}+E_{21})=%
\begin{bmatrix}
m & r\\
r & -m
\end{bmatrix},
\]
for some $m\in\mathbb{R}$ and $r \in\{-1,1\}.$
\end{lemma}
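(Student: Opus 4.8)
The plan is to determine $B:=\phi(E_{11})$ and $F:=\phi(E_{12}+E_{21})$, since $\phi(E_{22})=\phi(I-E_{11})=I-\phi(E_{11})=I-B$ by linearity and $\phi(I)=I$ (Proposition \ref{bijunit}). The recurring difficulty is that preservation of the L-spectrum is only set-level, so \emph{a priori} one does not know whether an eigenvalue of an image is interior or boundary; the device that circumvents this is Lemma \ref{key-result}, applied to a perturbation having two distinct strict boundary L-eigenvalues.

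First I would fix the trace and anti-trace of both $B$ and $F$ at once. Consider, for $\epsilon\neq 0$, the family
\[
M_\epsilon:=-E_{11}+\epsilon(E_{12}+E_{21})=\begin{bmatrix}-1 & \epsilon\\ \epsilon & 0\end{bmatrix},
\]
which lies in $W_2$ whether $W_2=M_2$ or $W_2=S_2$. By Theorem \ref{lcs2x2}, $M_\epsilon$ has exactly the two distinct strict boundary L-eigenvalues $\tfrac{-1+2\epsilon}{2}$ and $\tfrac{-1-2\epsilon}{2}$, so Lemma \ref{key-result} gives $\sigma_{\mathcal{K}}^{bd}(\phi(M_\epsilon))=\{\tfrac{-1+2\epsilon}{2},\tfrac{-1-2\epsilon}{2}\}$. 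As these two values are distinct, one is a type $+$ and the other a type $-$ boundary L-eigenvalue of $\phi(M_\epsilon)$, so Corollary \ref{2bd} yields $\operatorname{tr}(\phi(M_\epsilon))=-1$ and $|\operatorname{antitr}(\phi(M_\epsilon))|=2|\epsilon|$. Since $\phi(M_\epsilon)=-B+\epsilon F$ depends affinely on $\epsilon$, comparing the two sides of $\operatorname{tr}(-B+\epsilon F)=-1$ gives $\operatorname{tr}(B)=1$ and $\operatorname{tr}(F)=0$, and comparing the two sides of $|-\operatorname{antitr}(B)+\epsilon\operatorname{antitr}(F)|=2|\epsilon|$ (e.g.\ by squaring and matching coefficients) gives $\operatorname{antitr}(B)=0$ and $|\operatorname{antitr}(F)|=2$. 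The claimed form of $F=\phi(E_{12}+E_{21})$ follows immediately.

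Next I would nail down $B$ using $\sigma_{\mathcal{K}}(B)=\sigma_{\mathcal{K}}(E_{11})=\{0\}$ (Corollary \ref{spect-cases}). With $\operatorname{tr}(B)=1$ and $\operatorname{antitr}(B)=0$, write $B=\begin{bmatrix}a' & b'\\ -b' & 1-a'\end{bmatrix}$. Because $\operatorname{antitr}(B)=0$, the only candidate boundary L-eigenvalue of $B$ is $\operatorname{tr}(B)/2=\tfrac12$, and Theorem \ref{lcs2x2} shows $\tfrac12\in\sigma_{\mathcal{K}}^{bd}(B)$ precisely when $a'\leq \tfrac12+|b'|$; since $\tfrac12\notin\sigma_{\mathcal{K}}(B)$, we must have $a'>\tfrac12+|b'|$. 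Then $B$ has no boundary L-eigenvalue, so its unique L-eigenvalue $0$ is interior, hence a standard eigenvalue (Proposition \ref{intchar}), forcing $\det B=a'(1-a')+b'^2=0$. Writing $a:=1-a'$, the determinant condition becomes $b'^2=a^2-a$ while $a'>\tfrac12$ becomes $a<\tfrac12$; since $b'^2\geq 0$ forces $a\leq 0$ or $a\geq 1$, we conclude $a\leq 0$. This gives the stated forms of $\phi(E_{11})$ and, via $\phi(E_{22})=I-B$, of $\phi(E_{22})$.

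For the symmetric case $W_2=S_2$, everything specializes: $\phi$ preserves symmetry, so $B$ is symmetric, whence $b'=0$ (combining $b'=-c'$ from $\operatorname{antitr}(B)=0$ with $b'=c'$ from symmetry), and $B=\operatorname{diag}(a',1-a')$; then $a'>\tfrac12$ together with $\det B=0$ forces $a'=1$, i.e.\ $\phi(E_{11})=E_{11}$ and $\phi(E_{22})=E_{22}$, while symmetry of $F$ with $\operatorname{tr}(F)=0$ and $|\operatorname{antitr}(F)|=2$ gives $F=\begin{bmatrix}m & r\\ r & -m\end{bmatrix}$ with $r\in\{-1,1\}$. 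I expect the main obstacle to be exactly the interior/boundary bookkeeping flagged above: the whole argument hinges on converting the set-level identity $\sigma_{\mathcal{K}}(\phi(M_\epsilon))=\sigma_{\mathcal{K}}(M_\epsilon)$ into \emph{linear} trace and anti-trace relations, which is possible only because $M_\epsilon$ has two distinct strict boundary L-eigenvalues and thus triggers Lemma \ref{key-result} and Corollary \ref{2bd}.
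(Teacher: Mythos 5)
Your proof is correct and follows essentially the same route as the paper's: the paper perturbs $E_{22}$ (via $G_{\varepsilon}=E_{22}+\varepsilon(E_{12}+E_{21})$) rather than $-E_{11}$, but the mechanism --- Lemma~\ref{key-result} plus Corollary~\ref{2bd} applied to a family with two distinct strict boundary L-eigenvalues to extract the trace and anti-trace linearly in $\varepsilon$, followed by a forced interior L-eigenvalue yielding a determinant equation that pins down the off-diagonal entries and the sign of $a$ --- is identical. The only cosmetic difference is that you identify $0$ as an interior L-eigenvalue of $\phi(E_{11})$ after excluding the sole boundary candidate $\tfrac12$, whereas the paper identifies $1$ as an interior L-eigenvalue of $\phi(E_{22})$ directly.
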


\begin{proof}
For $\varepsilon\in\mathbb{R}\setminus\{0\},$ let $G_{\varepsilon}%
:=E_{22}+\varepsilon(E_{12}+E_{21})$, whose standard eigenvalues are
$(1\pm\sqrt{1+4\varepsilon^{2}})/2$. By Theorem \ref{lcs2x2},
\[
\sigma_{\mathcal{K}}^{int}(G_{\varepsilon})=\left\{  \frac{1+\sqrt
{1+4\varepsilon^{2}}}{2}\right\}  \text{ and }\sigma_{\mathcal{K}}%
^{bd}(G_{\varepsilon})=\left\{  \frac{1}{2}\pm\varepsilon\right\},
\]
and both boundary L-eigenvalues are strict. Thus, by Lemma \ref{key-result},
(\ref{ds}) holds with $A$ replaced by $G_{\varepsilon}$. Let
\[
\phi(E_{22}):=%
\begin{bmatrix}
a & b\\
c & d
\end{bmatrix}
\text{ and }\phi(E_{12}+E_{21}):=%
\begin{bmatrix}
m & r\\
p & q
\end{bmatrix}.
\]
Then, by Corollary \ref{2bd} applied to  $\phi(G_{\varepsilon})$,
\[
a+d+\varepsilon(m+q)=1,\quad b+c+\varepsilon(r+p)=\pm2\varepsilon.
\]
Since $\varepsilon\neq 0$ is arbitrary, we have
\[
a+d=1,\quad m+q=0,\quad b+c=0,\quad r+p=\pm2.
\]
Hence,
\[
\phi(E_{22})=%
\begin{bmatrix}
a & b\\
-b & 1-a
\end{bmatrix}
\text{ and}\quad\phi(E_{12}+E_{21})=%
\begin{bmatrix}
m & r\\
- r\pm2 & -m
\end{bmatrix}
.
\]
From the obtained form of $\phi(E_{22}),$ we conclude, by Theorem
\ref{lcs2x2}, that $1$ is not a boundary L-eigenvalue of $\phi(E_{22}).$ Since
$\sigma_{\mathcal{K}}(\phi(E_{22}))=\sigma_{\mathcal{K}}(E_{22})=\{1,1/2\},$
it follows that $1$ is an interior L-eigenvalue of $\phi(E_{22})$. This
implies that
\[
\det(\phi(E_{22})-I)=b^{2}-a^{2}+a=0.
\]
By Theorem \ref{lcs2x2}, $b\neq 0$. Moreover, $|b|<|a-1|$, i.e.,   $b^{2}<(a-1)^{2}$. Since
$b^{2}=a(a-1)\geq0$, we get $a\leq0$,
\[
\phi(E_{22})=%
\begin{bmatrix}
a & \pm\sqrt{a^{2}-a}\\
\mp\sqrt{a^{2}-a} & 1-a
\end{bmatrix},
\text{ and}%
\]%
\begin{equation}\label{eqqq}
\phi(E_{11})= \phi(I-E_{22})=I-\phi(E_{22})=%
\begin{bmatrix}
1-a & \mp\sqrt{a^{2}-a}\\
\pm\sqrt{a^{2}-a} & a
\end{bmatrix},
\end{equation}
where the second equality in \eqref{eqqq}  follows from Proposition \ref{bijunit}.

The particular claim in the statement for  $W_{2}=S_{2}$ follows since $\phi(E_{11})$ and $\phi(E_{12}+E_{21})$ are symmetric and $a \leq 0$.
\end{proof}

Notice that, if $\phi: W_2\rightarrow W_2$ is a linear map preserving the $L$-spectrum,  by  Lemma \ref{E22-E11}, $\phi$  preserves the trace of $E_{11}$, $E_{22}$ and
$E_{12}+E_{21},$ and therefore it preserves the trace of all matrices in $S_{2}$. Also,
observe that $\phi$ preserves the modulus of the anti-trace of $E_{11}$,
$E_{22}$, and $E_{12}+E_{21}.$ Moreover, if $\phi$ preserves the anti-trace of
$E_{12}+E_{21}$, then $\phi$ preserves the anti-trace of all matrices in
$S_{2}$; otherwise, the anti-traces of $A$ and $\phi(A)$ have opposite signs
for all $A\in S_{2}.$ 
These results are contained in the following
corollary and  extended to the case $\phi:M_{2}\rightarrow M_{2}$.

\begin{corollary}
\label{trace} Let $\phi:W_{2}\rightarrow W_{2}$ be a linear map that preserves
the L-spectrum. Then,
\[
\operatorname*{tr}(A)=\operatorname*{tr}(\phi(A))\quad\text{for all }A\in
W_{2},
\]
and either
\[
\operatorname*{antitr}(A)=\operatorname*{antitr}(\phi(A))\quad\text{for all
}A\in W_{2}%
\]
or
\[
\operatorname*{antitr}(A)=-\operatorname*{antitr}(\phi(A))\quad\text{for all
$A\in W_{2}.$}%
\]

\end{corollary}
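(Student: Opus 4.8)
The plan is to leverage Lemma \ref{E22-E11} together with linearity of $\phi$, treating the two cases $W_2=S_2$ and $W_2=M_2$ in turn. First I would handle the trace statement. By Lemma \ref{E22-E11}, I can read off that $\operatorname{tr}(\phi(E_{11}))=(1-a)+a=1=\operatorname{tr}(E_{11})$, that $\operatorname{tr}(\phi(E_{22}))=a+(1-a)=1=\operatorname{tr}(E_{22})$, and that $\operatorname{tr}(\phi(E_{12}+E_{21}))=m+(-m)=0=\operatorname{tr}(E_{12}+E_{21})$. Since $\{E_{11},E_{22},E_{12}+E_{21}\}$ is a basis of $S_2$ and $\operatorname{tr}$ is linear, trace preservation on all of $S_2$ follows immediately by expanding an arbitrary $A\in S_2$ in this basis and using linearity of both $\phi$ and $\operatorname{tr}$.

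Next I would do the same computation for the anti-trace. From Lemma \ref{E22-E11}, the anti-traces of the images are $\operatorname{antitr}(\phi(E_{11}))=\mp\sqrt{a^2-a}\pm\sqrt{a^2-a}=0=\operatorname{antitr}(E_{11})$ and similarly $\operatorname{antitr}(\phi(E_{22}))=0=\operatorname{antitr}(E_{22})$, so on these two generators the anti-trace is automatically preserved. The only generator where a sign ambiguity arises is $E_{12}+E_{21}$, whose anti-trace is $2$, while $\operatorname{antitr}(\phi(E_{12}+E_{21}))=r+(-r\pm2)=\pm2$. Thus either $\operatorname{antitr}(\phi(E_{12}+E_{21}))=2=\operatorname{antitr}(E_{12}+E_{21})$ or $\operatorname{antitr}(\phi(E_{12}+E_{21}))=-2=-\operatorname{antitr}(E_{12}+E_{21})$, with the sign determined once and for all by the fixed choice in Lemma \ref{E22-E11}. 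By linearity, this single global sign propagates to every matrix in $S_2$: expanding $A$ in the basis and noting that the $E_{11},E_{22}$ contributions to the anti-trace vanish on both sides, I get either $\operatorname{antitr}(A)=\operatorname{antitr}(\phi(A))$ for all $A\in S_2$ or $\operatorname{antitr}(A)=-\operatorname{antitr}(\phi(A))$ for all $A\in S_2$, as claimed.

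For the extension to $W_2=M_2$, the basis is $\{E_{11},E_{22},E_{21},E_{12}+E_{21}\}$, so the only new generator to control is $E_{21}$; everything else reduces to the symmetric computation above. I expect this to be the main obstacle, since Lemma \ref{E22-E11} as stated does not directly pin down $\phi(E_{21})$. The key step here is to produce the trace and anti-trace of $\phi(E_{21})$ by the same strict-boundary-eigenvalue technique used throughout the section: I would choose a one-parameter family such as $H_\varepsilon:=E_{22}+\varepsilon E_{21}$ (or a suitable variant whose two boundary L-eigenvalues are distinct and strict), verify via Theorem \ref{lcs2x2} that Lemma \ref{key-result} applies, and then invoke Corollary \ref{2bd} on $\phi(H_\varepsilon)$ exactly as in the proof of Lemma \ref{E22-E11} to extract $\operatorname{tr}(\phi(E_{21}))=\operatorname{tr}(E_{21})=0$ and to constrain $\operatorname{antitr}(\phi(E_{21}))$. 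Once I show $\operatorname{antitr}(\phi(E_{21}))=\operatorname{antitr}(E_{21})$ up to the same global sign already fixed by $E_{12}+E_{21}$ (consistency of the sign across generators being the delicate point, which should follow because a single family simultaneously sees both $E_{21}$ and $E_{12}+E_{21}$, forcing one coherent sign), linearity over the four-element basis delivers both conclusions on all of $M_2$ in the same way as for $S_2$.
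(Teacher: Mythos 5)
Your route is viable but genuinely different from the paper's. The paper does not argue generator-by-generator at all: for an \emph{arbitrary} $A\in W_2$ it perturbs to $A_\delta=A+\delta E_{22}-\delta(E_{12}+E_{21})$, checks via Theorem \ref{lcs2x2} that $A_\delta$ has two distinct strict boundary L-eigenvalues for all $\delta$ outside a small exceptional set, applies Lemma \ref{key-result} and Corollary \ref{2bd} to $\phi(A_\delta)$ (whose trace and anti-trace are known up to the data of $\phi(A)$ by Lemma \ref{E22-E11}), and lets $\delta$ range over an infinite set to extract $\operatorname{tr}(A)=\operatorname{tr}(\phi(A))$ and $\operatorname{antitr}(A)=\pm\operatorname{antitr}(\phi(A))$ directly, with the sign tied uniformly to $\operatorname{antitr}(\phi(E_{12}+E_{21}))=\pm2$. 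This buys two things your basis-plus-linearity scheme has to work for: it covers $E_{21}$ with no extra argument, and the global coherence of the sign across all of $W_2$ comes for free. Your version is fine for $W_2=S_2$ and for the trace statement in general, and your plan for $\operatorname{tr}(\phi(E_{21}))=0$ and $|\operatorname{antitr}(\phi(E_{21}))|=1$ via $H_\varepsilon=E_{22}+\varepsilon E_{21}$ (strict and distinct boundary L-eigenvalues for $0<|\varepsilon|<1$) does work. The one step you leave as a gesture is exactly the step that needs care: the family $E_{22}+\varepsilon E_{21}$ does \emph{not} ``see'' $E_{12}+E_{21}$, so by itself it cannot rule out $\operatorname{antitr}(\phi(E_{21}))=-1$ while $\operatorname{antitr}(\phi(E_{12}+E_{21}))=+2$, in which case $\operatorname{antitr}\circ\phi$ would be neither $\operatorname{antitr}$ nor $-\operatorname{antitr}$. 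To close this you need a family involving both generators, e.g.\ $A_{s,t}=E_{22}+sE_{21}+t(E_{12}+E_{21})$ with $|s|<1$ and $s+2t\neq0$: Theorem \ref{lcs2x2} gives two distinct strict boundary L-eigenvalues with difference $\pm(s+2t)$, while Corollary \ref{2bd} applied to $\phi(A_{s,t})$ gives difference $\pm(\epsilon_2 s+2\epsilon_1 t)$ for the two signs $\epsilon_1,\epsilon_2\in\{\pm1\}$; equality of moduli for all admissible $(s,t)$ forces $\epsilon_1=\epsilon_2$. With that inserted, your argument is complete, though at the cost of more casework than the paper's single perturbation of a generic $A$.
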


\begin{proof}
Let $A$ be as in (\ref{A}) and let
\[
\phi(A):=\left[
\begin{array}
[c]{cc}%
r & s\\
p & q
\end{array}
\right].
\]
Let $\delta$ be an arbitrary real number such that
\[
a-d<\delta+c-b,\quad a-d<\delta+b-c,\quad\text{and} \quad b+c\neq2\delta.
\]
Let $A_{\delta}=A+\delta E_{22}-\delta(E_{12}+E_{21})$. Notice that
$A_{\delta}$ has two strict boundary L-eigenvalues, namely
\begin{equation}
\lambda_{1}=\frac{a+d+b+c-\delta}{2}\quad \text{and} \quad \lambda_{2}=\frac{a+d-b-c+3\delta
}{2},\label{cond}%
\end{equation}
which are distinct since $b+c\neq2\delta.$ Thus, by Lemma \ref{key-result},
$\lambda_{1}$ and $\lambda_{2}$ are also boundary L-eigenvalues of
$\phi(A_{\delta})$. Taking into account the form of $\phi(\delta E_{22}%
-\delta(E_{12}+E_{21}))$ that follows from Lemma \ref{E22-E11}, the boundary
L-eigenvalues of $\phi(A_{\delta})$ are
\begin{equation}
\beta_{1}=\frac{r+q+s+p-\delta}{2},\quad\beta_{2}=\frac{r+q-s-p+3\delta}%
{2}\label{d1}%
\end{equation}
if $\text{antitr}(\phi(E_{12}+E_{21}))=2,$ and
\begin{equation}
\beta_{1}=\frac{r+q+s+p+3\delta}{2},\quad\beta_{2}=\frac{r+q-s-p-\delta}%
{2}\label{d2}%
\end{equation}
if $\text{antitr}(\phi(E_{12}+E_{21}))=-2.$ As $\{\lambda_{1}%
,\lambda_{2}\}=\{\beta_{1},\beta_{2}\}$, we have
\[
\lambda_{1}+\lambda_{2}=\beta_{1}+\beta_{2}%
\]
and
\[
\text{ }\lambda_{1}-\lambda_{2}=\beta_{1}-\beta_{2}\quad \text{ or }\quad \lambda
_{1}-\lambda_{2}=-(\beta_{1}-\beta_{2}).
\]
Since $\lambda_{1}+\lambda_{2}=a+d+\delta$ and $\beta_{1}+\beta_{2}%
=r+q+\delta,$ we get $a+d=r+q.$ We also have $\lambda_{1}-\lambda
_{2}=b+c-2\delta$. Moreover, $\beta_{1}-\beta_{2}=s+p-2\delta$ if (\ref{d1}) holds,
and $\beta_{1}-\beta_{2}=s+p+2\delta$ if (\ref{d2}) holds. In the first case,
$\lambda_{1}-\lambda_{2}=-(\beta_{1}-\beta_{2})$ only for $\delta
=\frac{b+c+s+p}{4}.$ Thus, for $\delta\neq\frac{b+c+s+p}{4}$, we have
$\lambda_{1}-\lambda_{2}=\beta_{1}-\beta_{2},$ implying $b+c=s+p.$ In the
second case, $\lambda_{1}-\lambda_{2}=\beta_{1}-\beta_{2}$ only for
$\delta=\frac{b+c-s-p}{4}.$ Thus, for $\delta\neq\frac{b+c-s-p}{4}$, we have
$\lambda_{1}-\lambda_{2}=-(\beta_{1}-\beta_{2}),$ implying $b+c=-(s+p).$ Since
$\delta$ is an arbitrary number satisfying (\ref{cond}), it ranges over an
infinite set, and hence the claim follows. 
\end{proof}

We next describe the generic structure of the image of $E_{21}$ under a linear
map preserving the L-spectrum.

\begin{lemma}
\label{lE21}Let $\phi:M_{2}\rightarrow M_{2}$ be a linear map that preserves
the L-spectrum. Then,
\[
\phi(E_{21})=\left[
\begin{array}
[c]{cc}%
\pm\sqrt{b^{2}+b} & \mp b\\
\pm(b+1) & \mp\sqrt{b^{2}+b}%
\end{array}
\right]  ,\quad b\geq0.
\]

\end{lemma}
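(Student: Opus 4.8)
The plan is to pin down the four entries of $\phi(E_{21})=\begin{bmatrix} r & s\\ p & q\end{bmatrix}$ one invariant at a time, in the order trace, anti-trace, determinant, and finally the full L-spectrum. First I would invoke Corollary \ref{trace}: since $\operatorname{tr}(E_{21})=0$ and $\operatorname{antitr}(E_{21})=1$, it gives $q=-r$ and $s+p=\pm1$, the sign being the global anti-trace sign of $\phi$ provided by that corollary. This already reduces the unknowns to the two scalars $r$ and $s$.

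Next I would show $\phi(E_{21})$ is singular. For $t<-1$ set $C_t:=t\,E_{11}+E_{21}$. A direct application of Theorem \ref{lcs2x2} shows that $C_t$ has $0$ as an interior L-eigenvalue and two distinct strict boundary L-eigenvalues $\tfrac{t+1}{2}$ and $\tfrac{t-1}{2}$, so Lemma \ref{key-result} applies and yields $0\in\sigma_{\mathcal{K}}^{int}(\phi(C_t))$. As interior L-eigenvalues are standard eigenvalues (Theorem \ref{lcs2x2}), this forces $\det\phi(C_t)=0$ for all $t<-1$. Now $t\mapsto\det\phi(C_t)=\det(t\,\phi(E_{11})+\phi(E_{21}))$ is a polynomial of degree at most two in $t$ vanishing on an interval, hence vanishes identically; evaluating at $t=0$ gives $\det\phi(E_{21})=0$, that is $r^2+sp=0$. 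This is a cleaner route to singularity than Lemma \ref{singular} (which only concerns $E_{11}+E_{21}$), and one can check the two are consistent via the explicit form of $\phi(E_{11})$ in Lemma \ref{E22-E11}.

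Combining $s+p=\pm1$ with $sp=-r^2\le0$, the numbers $s$ and $p$ have opposite signs (or one is zero) and sum to $\pm1$; writing $b\ge0$ for the appropriate nonnegative quantity gives $\{s,p\}=\{-b,\,b+1\}$ when $s+p=1$ (and the mirror set $\{b,\,-(b+1)\}$ when $s+p=-1$), together with $r^2=b(b+1)=b^2+b$, so $r=\pm\sqrt{b^2+b}$. It then remains only to decide which root is $s$ and which is $p$, i.e. to fix the assignment of the off-diagonal entries, which is where the matrix shape of the statement is actually determined.

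This assignment is the last and most delicate step. Here I would compute $\sigma_{\mathcal{K}}(\phi(E_{21}))$ explicitly from Theorem \ref{lcs2x2} and impose $\sigma_{\mathcal{K}}(\phi(E_{21}))=\sigma_{\mathcal{K}}(E_{21})=\{0\}\cup\{1/2\}$ (Corollary \ref{spect-cases}). Since $\phi(E_{21})$ has trace and determinant both zero, its only candidate interior L-eigenvalue is $0$, while its two candidate boundary L-eigenvalues are exactly $\pm\tfrac12$; requiring that $+\tfrac12$ be an L-eigenvalue and that $-\tfrac12$ not be one translates, through the type $\pm$ inequalities $2r\le p-s$ and $2r\le s-p$ of Theorem \ref{lcs2x2}, into $p-s>0$ (resp. $s-p>0$), which selects the correct assignment $s=-b,\ p=b+1$ (resp. $s=b,\ p=-(b+1)$) with $b\ge0$. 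I expect the bookkeeping of these strict inequalities to be the main obstacle, since it is precisely what distinguishes the stated matrix from its swapped, transpose-type partner: for instance the wrong assignment at $b=0$ would produce $E_{12}$ in place of $E_{21}$, and $\sigma_{\mathcal{K}}(E_{12})\neq\sigma_{\mathcal{K}}(E_{21})$.
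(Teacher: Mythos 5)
Your argument is correct, and its skeleton --- trace/anti-trace reduction via Corollary \ref{trace}, singularity of $\phi(E_{21})$, then exploiting $\sigma_{\mathcal{K}}(E_{21})=\{0,1/2\}$ --- is the same as the paper's; you diverge in two sub-steps, both validly. First, for singularity the paper needs neither Lemma \ref{singular} nor your perturbation family $C_t$: once Corollary \ref{trace} forces $\phi(E_{21})$ to have trace $0$ and anti-trace $\pm1$, Theorem \ref{lcs2x2} shows its boundary L-eigenvalues lie in $\{-1/2,1/2\}$, so the L-eigenvalue $0$ must be interior, hence a standard eigenvalue, and $\det\phi(E_{21})=0$ follows in one line; your $C_t$ argument does work (for $t<-1$ the matrix $C_t$ has interior L-eigenvalue $0$ and two distinct strict boundary L-eigenvalues $\tfrac{t\pm1}{2}$, so Lemma \ref{key-result} applies, and a polynomial of degree at most two in $t$ vanishing on an interval is identically zero), but it is a detour. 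Second, to decide which off-diagonal entry is $-b$ and which is $b+1$, the paper uses the \emph{interior} condition of Theorem \ref{lcs2x2} (in its notation, either $a=b=0$ or $|b|<|a|$), which combined with $a^{2}=b^{2}\mp b$ forces the sign of the $(1,2)$ entry to be opposite to that of the anti-trace; you instead use the \emph{boundary} conditions --- the type $+$ inequality must hold so that $1/2\in\sigma_{\mathcal{K}}^{bd}(\phi(E_{21}))$, and the type $-$ inequality must fail so that $-1/2$ is not an L-eigenvalue --- yielding $p>s$ and hence the same assignment. Both resolutions are sound: yours makes explicit why the transposed candidate (e.g.\ $E_{12}$ at $b=0$) is excluded, while the paper's is shorter and keeps the whole proof to a few lines.
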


\begin{proof}
By Corollary \ref{trace},
\[
\phi(E_{21})=\left[
\begin{array}
[c]{cc}%
a & b\\
-b\pm1 & -a
\end{array}
\right]
\]
for some $a,b\in\mathbb{R}$. By Theorem \ref{lcs2x2}, this implies 
$\sigma_{\mathcal{K}}^{bd}(\phi(E_{21}))\subseteq\{-1/2,1/2\}$. On the other
hand, by Corollary \ref{spect-cases}, $\sigma_{\mathcal{K}}(E_{21})=\{0,1/2\}$ . Thus, since $\phi$ preserves the L-spectrum, $0$ is an interior L-eigenvalue of $\phi(E_{21})$. Hence, by
Theorem \ref{lcs2x2}, either $a=b=0$, or $|b|<|a|$ (i.e., $b^2 < a^2$). Since $\phi(E_{21})$ is
singular, we also have $a^{2}=b^{2}\mp b.$ Thus, $a^{2} = b^{2} + b$ if $b>0$
and $a^{2}= b^{2}-b$ if $b<0$, implying the claim.
\end{proof}

\subsection{Explicit image of $E_{12}+E_{21}$}

The following two lemmas will be used in determining $\phi(E_{12}+E_{21})$ under a linear L-spectrum preserver $\phi$.
By $|| \cdot ||_F$ we denote the Frobenius norm of a matrix. 
\begin{lemma}
\label{lintlim}Let $A\in M_{2}$ be as in (\ref{A}). Suppose $A$ has two
distinct standard real eigenvalues and at least one of them, say $\lambda
_{A},$ is an interior L-eigenvalue$.$ Moreover, suppose that $\lambda_{A}\neq
a$. Then, for any $\varepsilon>0,$ there is some $\delta>0$ such that any
$B\in M_{2}$ with $||B-A||_{F}<\delta$ has an interior L-eigenvalue
$\lambda_{B}$ satisfying $|\lambda_{A}-\lambda_{B}|<\varepsilon.$ That is,
sufficiently small perturbations of $A$ have an interior L-eigenvalue
arbitrarily close to $\lambda_{A}.$
\end{lemma}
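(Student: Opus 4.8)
The plan is to translate both the hypotheses and the desired conclusion into the explicit characterization of interior L-eigenvalues given in Theorem \ref{lcs2x2}(2), and then let continuity do the work. First I would record what the assumptions yield through that characterization. Since $A$ has two distinct real standard eigenvalues, the discriminant $(a-d)^2+4bc$ is \emph{strictly} positive; and since $\lambda_A\neq a$ is an interior L-eigenvalue, Theorem \ref{lcs2x2}(2) says that $\lambda_A=\frac{a+d\pm\sqrt{(a-d)^2+4bc}}{2}$ is one of these two standard eigenvalues and, crucially, that $|b|<|a-\lambda_A|$, i.e.\ $b^2<(a-\lambda_A)^2$. Both of these are strict inequalities, which is exactly what makes the perturbation argument go through.

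Next, writing a nearby matrix as $B=\begin{bmatrix} a' & b'\\ c' & d'\end{bmatrix}$, I would use that each primed entry differs from its unprimed counterpart by at most $||B-A||_F$, so every polynomial expression in the entries varies continuously with $B$. Because the discriminant $(a'-d')^2+4b'c'$ is continuous and strictly positive at $A$, there is a neighborhood of $A$ throughout which $B$ also has two distinct real standard eigenvalues, given by the same closed-form expression with primed entries. Selecting the branch (the $+$ or the $-$) that equals $\lambda_A$ at $A$, the associated standard eigenvalue $\lambda_B$ of $B$ is a continuous function of the entries on this neighborhood, since the square root is continuous where its argument is bounded away from $0$. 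Hence $\lambda_B\to\lambda_A$ as $B\to A$, which already delivers $|\lambda_A-\lambda_B|<\varepsilon$ once $\delta$ is small enough.

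It remains to confirm that this $\lambda_B$ is an \emph{interior} L-eigenvalue of $B$, not merely a standard one. By Theorem \ref{lcs2x2}(2) it suffices to check $|b'|<|a'-\lambda_B|$. Since $b'$ and $a'-\lambda_B$ are continuous in the entries of $B$ on the neighborhood above, we have $(b')^2\to b^2$ and $(a'-\lambda_B)^2\to(a-\lambda_A)^2$; as $b^2<(a-\lambda_A)^2$ strictly, a further shrinking of $\delta$ preserves $(b')^2<(a'-\lambda_B)^2$. In particular $a'\neq\lambda_B$, so the degenerate case $\lambda=a$ of the theorem never intervenes. Taking $\delta$ to meet simultaneously the finitely many smallness demands (positivity of the discriminant, $|\lambda_A-\lambda_B|<\varepsilon$, and $|b'|<|a'-\lambda_B|$) completes the argument.

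The only delicate point is the continuous tracking of the correct standard eigenvalue $\lambda_B$, but in the $2\times2$ setting this is transparent: the eigenvalues are given by an explicit formula whose sole singularity is the vanishing of the discriminant, and the strict-positivity hypothesis keeps us away from that locus. I therefore expect no real obstacle beyond bookkeeping the several strict inequalities and intersecting their neighborhoods.
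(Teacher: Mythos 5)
Your proposal is correct and follows essentially the same route as the paper's own proof: translate the hypothesis into the strict inequality $b^{2}<(a-\lambda_{A})^{2}$ from Theorem \ref{lcs2x2}(2), use the strictly positive discriminant to track the relevant standard eigenvalue continuously under small perturbations, and observe that the strict inequality (hence interiority) persists. Your write-up is only slightly more explicit than the paper's about the branch selection via the quadratic formula; otherwise the arguments coincide.
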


\begin{proof}
Suppose that $\lambda_{A}$ is an interior L-eigenvalue of $A.$ By Theorem
\ref{lcs2x2}, since $\lambda_{A}\neq a$, we have $|b|<|a-\lambda_{A}|,$
that is, $b^{2}-(a-\lambda_{A})^{2}<0.$ Since $\lambda_{A}$ depends
continuously on the entries of $A$, any sufficiently small perturbation of $A,$
say%
\[
A_{\varepsilon}:=\left[
\begin{array}
[c]{cc}%
a_{\varepsilon} & b_{\varepsilon}\\
c_{\varepsilon} & d_{\varepsilon}%
\end{array}
\right]  ,
\]
has a real eigenvalue $\lambda_{A}^{\varepsilon}$ arbitrarily close to
$\lambda_{A}$ and such that $\lambda_{A}^{\varepsilon}\neq a_{\varepsilon}$
and $|b_{\varepsilon}|<|a_{\varepsilon}- \lambda_{A}^{\varepsilon}|$. Note that, since $A$ has distinct real eigenvalues, for $\varepsilon$ sufficiently small,  both eigenvalues  of $A_{\varepsilon}$ are also distinct and real.  By Theorem
\ref{lcs2x2}, $\lambda_{A}^{\varepsilon}$ is an interior L-eigenvalue of
$A_{\varepsilon}.$ 
\end{proof}

\begin{lemma}
\label{E1221pert}Let $\lambda\in\{-1,1\}.$ Then, there is some $\varepsilon>0$ such
that, in any neighborhood of $E_{12}+E_{21}$, there is a matrix with no
L-eigenvalue at distance from $\lambda$ smaller than $\varepsilon$.
\end{lemma}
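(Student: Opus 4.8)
The plan is to establish the desired instability of each non-strict boundary L-eigenvalue of $E_{12}+E_{21}$ by exhibiting, arbitrarily close to $E_{12}+E_{21}$, matrices whose \emph{entire} L-spectrum is bounded away from $\lambda$. Recall from Corollary \ref{spect-cases} that $\sigma_{\mathcal{K}}(E_{12}+E_{21})=\{-1,1\}$, and that both values are non-strict boundary L-eigenvalues (here $a-d=0=c-b=b-c$). Since Theorem \ref{lcs2x2} computes the L-spectrum of any nearby matrix exactly, the whole argument reduces to tracking a few elementary inequalities in the entries $a,b,c,d$ as we perturb off the non-strict locus.

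For $\lambda=1$ I would use the one-parameter family
\[
B_{t}:=E_{12}+E_{21}+t(E_{11}-E_{22})=\begin{bmatrix} t & 1\\ 1 & -t\end{bmatrix},\qquad t>0 .
\]
For $t>0$ we have $a-d=2t>0=c-b=b-c$, so both Conditions 3a and 3b of Theorem \ref{lcs2x2} fail and $B_{t}$ has no boundary L-eigenvalue. Its standard eigenvalues are $\pm\sqrt{1+t^{2}}$, and checking Condition 2 shows that only $-\sqrt{1+t^{2}}$ is an interior L-eigenvalue, since the eigenvector attached to $+\sqrt{1+t^{2}}$ leaves the cone (i.e.\ $|b|\not<|a-\sqrt{1+t^{2}}|$). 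Hence $\sigma_{\mathcal{K}}(B_{t})=\{-\sqrt{1+t^{2}}\}$, whose only element lies at distance $1+\sqrt{1+t^{2}}>2$ from $1$. Choosing $\varepsilon=1$ and letting $t\downarrow0$ places such a $B_{t}$ in every neighborhood of $E_{12}+E_{21}$, which settles the case $\lambda=1$. (Lemma \ref{lintlim} could be invoked to certify that the surviving interior L-eigenvalue genuinely persists and stays near $-1$, but the explicit formula already gives this.)

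The step I expect to be the main obstacle is the case $\lambda=-1$, and the difficulty appears to be genuine rather than cosmetic. The naive analogue -- taking $t<0$ above, or conjugating by $T=[-1]\oplus[1]$ -- does not work. Among the two standard eigenvalues $\frac{(a+d)\pm\sqrt{(a-d)^{2}+4bc}}{2}$ of a matrix near $E_{12}+E_{21}$ (both real, as the discriminant is near $4$), the \emph{smaller} root, near $-1$, is always an L-eigenvalue: by Theorem \ref{lcs2x2} it is an interior L-eigenvalue exactly when $a-d>b-c$, whereas in the complementary range $a-d\le b-c$ the type $-$ boundary L-eigenvalue $\frac{(a+d)-(b+c)}{2}$ -- itself near $-1$ -- appears, and these two conditions exhaust all cases. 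By contrast, \emph{no} L-eigenvalue sits near $+1$ precisely on the single open condition $a-d>c-b$, which is what made $\lambda=1$ easy. Reconciling this asymmetry is the crux: I would look either for a perturbation that removes the ``protected'' eigenvalue near $-1$, or for a symmetry of the L-spectrum that fixes a neighborhood of $E_{12}+E_{21}$ while interchanging $\pm1$; identifying such a device is the point on which I anticipate the real work of the proof to lie.
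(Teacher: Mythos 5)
Your treatment of $\lambda=1$ is correct and is exactly the paper's argument: the authors take $H_{\delta}:=E_{12}+E_{21}+\delta\left(\left[1\right]\oplus\left[-1\right]\right)$ with $\delta>0$ (your $B_{t}$), note that $a-d=2\delta>0=c-b=b-c$ kills both boundary conditions of Theorem \ref{lcs2x2}, and that $|b|=1\geq|\delta-\sqrt{\delta^{2}+1}|$ excludes $\sqrt{\delta^{2}+1}$ as an interior L-eigenvalue, leaving $\sigma_{\mathcal{K}}(H_{\delta})=\{-\sqrt{\delta^{2}+1}\}$, which stays at distance greater than $2$ from $1$.

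The obstacle you identify for $\lambda=-1$ is not something you failed to see around: your dichotomy is in fact a proof that the statement is \emph{false} for $\lambda=-1$. For $A$ near $H:=E_{12}+E_{21}$ one has $b\approx1>0$ and $a-\lambda_{-}=\frac{(a-d)+\sqrt{(a-d)^{2}+4bc}}{2}\approx1>0$ for the smaller standard eigenvalue $\lambda_{-}\approx-1$, so the interior condition $|b|<|a-\lambda_{-}|$ squares to $b-c<a-d$; this is precisely the complement of the condition $a-d\leq b-c$ under which the type $-$ boundary L-eigenvalue $\frac{(a+d)-(b+c)}{2}\approx-1$ exists. Hence \emph{every} matrix in a sufficiently small neighborhood of $H$ has an L-eigenvalue arbitrarily close to $-1$, and no perturbation can remove it; nor can any symmetry of the L-spectrum fixing a neighborhood of $H$ swap $\pm1$, since that would contradict the dichotomy you already established. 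The paper's own proof of this case is erroneous on two counts: it exhibits $-H_{\delta}$ with $\delta<0$, which is a perturbation of $-H$ rather than of $H$ and so is irrelevant to the statement; and even for that matrix the claim is wrong, since with $a=-\delta>0$ and $b=-1$ one gets $|a-\beta_{1}|=|\delta|+\sqrt{\delta^{2}+1}>1=|b|$, so the unique L-eigenvalue of $-H_{\delta}$ is $\beta_{1}=-\sqrt{\delta^{2}+1}$, which is \emph{close} to $-1$, not far from it. Note the downstream consequence: this lemma is invoked in Lemma \ref{E12+E21} only to exclude $\pm1$ as interior L-eigenvalues of $\phi(E_{12}+E_{21})$, and the exclusion of $-1$ there now requires a different argument.
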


\begin{proof}
Let $H:=E_{12}+E_{21}.$ For any $\delta \in\mathbb{R}$, the matrices
\begin{equation}
H_{\delta}:=H+\delta \left[
\begin{array}
[c]{cc}%
1 & 0\\
0 & -1
\end{array}
\right]
\end{equation}
and  $-H_{\delta}$ have standard eigenvalues $\beta_{1}=-\sqrt
{\delta^{2}+1}$ and $\beta_{2}=\sqrt{\delta^{2}+1}.$ Notice that,
for $i\in\{1,2\}$,
\begin{equation}
1\geq(\delta-\beta_{i})^{2}\ \Leftrightarrow\ 1-\delta^{2}-\beta
_{i}^{2}\geq-2\delta\beta_{i}\ \Leftrightarrow\ \delta^{2}%
\leq\delta\beta_{i}, \label{eq:aux}%
\end{equation}
where the last inequality follows from the second one by noting that $\beta_i^2 = \delta^2 +1$. 

Suppose that $\lambda=1$ and let $\delta >0$. From \eqref{eq:aux},
$|1|\geq|\delta-\beta_{2}|,$ implying by Theorem \ref{lcs2x2} that $\beta_{2}$ is not an interior
L-eigenvalue of $H_{\delta}$. On the other hand, $H_{\delta}$ has no
boundary L-eigenvalues. Hence, the only L-eigenvalue of $H_{\delta}$ is
$\beta_{1}$ whose distance from  $1$ is at least $2$, regardless of the value of $\delta
>0$.

With a similar argument, we can see that, for $\delta <0$,  the only L-eigenvalue of
$-H_{\delta}$ is $\beta_{2}$ whose distance from  $-1$ is at least $2$, regardless of
the value of $\delta<0.$

Thus, for each $\lambda\in\{1,-1\}$, there is some $\delta\in\mathbb{R}$
such that one of the matrices $H_{\delta}$ or $-H_{\delta}$ has no
L-eigenvalues arbitrarily close to $\lambda.$
\end{proof}

\begin{lemma}
\label{E12+E21} Suppose that $\phi:W_{2}\rightarrow W_{2}$ is a linear map
that preserves the L-spectrum. Then
$$\phi(E_{12}+E_{21}) = E_{12}+E_{21} \quad \text{or}\quad \phi(E_{12}+E_{21})= -(E_{12}+E_{21}).$$
\end{lemma}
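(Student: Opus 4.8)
We already know from Lemma~\ref{E22-E11} that
\[
\phi(E_{12}+E_{21})=
\begin{bmatrix}
m & r\\
-r\pm 2 & -m
\end{bmatrix}
\]
for some $m,r\in\mathbb{R}$, and from Corollary~\ref{spect-cases} that $\sigma_{\mathcal{K}}(E_{12}+E_{21})=\{-1,1\}$, both boundary L-eigenvalues. So the goal is to pin down $(m,r)$ to force $\phi(E_{12}+E_{21})\in\{E_{12}+E_{21},-(E_{12}+E_{21})\}$, i.e.\ $m=0$ and the lower-left entry equal to $r$ with $r\in\{-1,1\}$.

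**The plan.** The target statement is essentially a \emph{rigidity} result: an L-spectrum preserver cannot send the symmetric matrix $H:=E_{12}+E_{21}$ to a genuinely nonsymmetric image. The two preceding lemmas are clearly the tools. First I would record what $\phi$ must do to the L-spectrum of $H$ itself: since $\phi$ preserves the L-spectrum and $\sigma_{\mathcal{K}}(H)=\{-1,1\}$, the matrix $\phi(H)$ must also have L-spectrum exactly $\{-1,1\}$. Applying Theorem~\ref{lcs2x2} to the explicit form of $\phi(H)$ gives that $\pm1$ are its boundary L-eigenvalues and constrains $m$ and the entries. The heart of the argument, though, is a continuity/perturbation contradiction. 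Lemma~\ref{E1221pert} says that arbitrarily close to $H$ there is a matrix $B$ whose L-spectrum stays uniformly bounded away (by some fixed $\varepsilon>0$) from one of the values $\pm1$. Since $\phi$ is linear, hence continuous, $\phi(B)$ is arbitrarily close to $\phi(H)$. I would then use Lemma~\ref{lintlim} on $\phi(H)$: if $\phi(H)$ were of a form having a suitable interior L-eigenvalue near $\pm1$, every small perturbation $\phi(B)$ would inherit an interior L-eigenvalue near that value, contradicting that $B$ (and hence $\phi(B)=\phi^{-1}$-image) keeps its L-spectrum away from $\pm1$.

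**Executing the contradiction.** Concretely, I would argue as follows. Suppose for contradiction that $\phi(H)\neq \pm H$, so that either $m\neq0$ or the off-diagonal entries are not negatives of each other in the symmetric way. The key observation is that the L-eigenvalue $\pm1$ of $\phi(H)$ must be robust in one direction but fragile in the other: Lemma~\ref{E1221pert} produces, in every neighborhood of $H$, a matrix $B$ all of whose L-eigenvalues avoid (say) $\lambda_0\in\{-1,1\}$ by at least $\varepsilon$. Because $\phi^{-1}$ also preserves the L-spectrum (the remark after Proposition~\ref{bijunit}) and is continuous, I can instead start from the image side: examine the L-spectrum of $\phi(H)$ via Theorem~\ref{lcs2x2} and show that if $\phi(H)$ has a \emph{strict} boundary L-eigenvalue at some $\lambda_0\in\{-1,1\}$, or equivalently is not exactly $\pm H$, then $\phi(H)$ meets the hypotheses of Lemma~\ref{lintlim} (two distinct real standard eigenvalues with an interior L-eigenvalue $\lambda_A\ne a$). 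That would force every nearby matrix, in particular $\phi(B)$ for the perturbation $B$ furnished by Lemma~\ref{E1221pert}, to possess an interior L-eigenvalue within $\varepsilon$ of $\lambda_0$. But $\phi(B)$ has the same L-spectrum as $B$, which avoids $\lambda_0$ by $\varepsilon$ --- a contradiction. Ruling out both signs $\lambda_0=1$ and $\lambda_0=-1$ forces $\phi(H)$ to be symmetric with the correct antidiagonal, i.e.\ $m=0$ and lower-left entry $r\in\{-1,1\}$, giving $\phi(H)=\pm(E_{12}+E_{21})$.

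**Main obstacle.** The delicate point will be matching up the two lemmas' hypotheses: Lemma~\ref{lintlim} requires $\phi(H)$ to have \emph{distinct real} standard eigenvalues and an interior L-eigenvalue $\lambda_A\neq a$, while the candidate $\phi(H)=\begin{bmatrix} m & r\\ -r\pm2 & -m\end{bmatrix}$ has trace zero, so its eigenvalues are $\pm\sqrt{m^2+r(r\mp2)}$, which are real and distinct precisely when the discriminant is positive. I would have to verify that assuming $\phi(H)\neq\pm H$ places us in exactly the regime where this discriminant is positive and where one of $\pm1$ becomes an \emph{interior} rather than a strict \emph{boundary} L-eigenvalue (so that Lemma~\ref{lintlim} applies and the perturbation persists), while Lemma~\ref{E1221pert} provides the destabilizing perturbation on the domain side. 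Carefully tracking which of $\pm1$ is the fragile one for each sign choice $\mp2$, and confirming the nondegeneracy conditions $\lambda_A\neq a$ and distinctness, is where the real bookkeeping lies; the rest is a clean continuity contradiction.
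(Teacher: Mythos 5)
Your continuity argument reproduces, in essence, only the \emph{first half} of the paper's proof: combining Lemma~\ref{lintlim} on the image side with Lemma~\ref{E1221pert} on the domain side (and the continuity of $\phi$ or $\phi^{-1}$, both of which preserve the L-spectrum) does show that neither $1$ nor $-1$ can be an \emph{interior} L-eigenvalue of $\phi(H)$, where $H=E_{12}+E_{21}$. But the reduction you rely on --- that $\phi(H)\neq\pm H$ forces one of $\pm 1$ to become an interior L-eigenvalue of $\phi(H)$, so that this contradiction finishes the proof --- is false. For example, the matrix
\[
A=\begin{bmatrix} -2 & 3\\ -1 & 2\end{bmatrix}
\]
has trace $0$ and anti-trace $2$ (so it has the form allowed by Lemma~\ref{E22-E11}, with $m=-2$, $r=3$), and by Theorem~\ref{lcs2x2} its L-spectrum is exactly $\{-1,1\}$ with both values boundary L-eigenvalues and neither interior: its standard eigenvalues are $\pm1$, but neither satisfies $|b|<|a-\lambda|$. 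Since $1$ is a \emph{non-strict} boundary L-eigenvalue of $A$ (the inequality $a-d\leq c-b$ holds with equality), small perturbations of $A$ need not retain any L-eigenvalue near $1$, so your perturbation scheme yields no contradiction for this candidate. Note also that Lemma~\ref{lintlim} applies only to interior L-eigenvalues, not to strict boundary ones, so the phrase ``a strict boundary L-eigenvalue \dots or equivalently is not exactly $\pm H$'' conflates two different notions.

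What is missing is the second half of the paper's argument. Once $\pm1$ are known to be boundary (not interior) L-eigenvalues of $\phi(H)$, one uses linearity to note that $\phi(-H)=-\phi(H)$ must also have L-spectrum $\sigma_{\mathcal{K}}(-H)=\{-1,1\}$, applies Corollary~\ref{intA-A} to conclude that $\pm1$ are boundary L-eigenvalues of $-\phi(H)$ as well, and then imposes the type $+$ and type $-$ inequalities of Condition~3 of Theorem~\ref{lcs2x2} on \emph{both} $\phi(H)$ and $-\phi(H)$. The two pairs of opposite inequalities force equalities, which pin down $m=0$ and the off-diagonal entries to be $\pm1$. (In the example above, $\sigma_{\mathcal{K}}(-A)=\{-1\}\neq\{-1,1\}$, which is exactly how that candidate is excluded.) Without this step your proof cannot conclude.
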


\begin{proof}
Let $H:=E_{12}+E_{21}$. 
By Corollary \ref{spect-cases} and Corollary \ref{rr1}, we have $\sigma_{\mathcal{K}}(H)= \sigma_{\mathcal{K}}(-H)=\{-1,1\}.$

We start by proving that $1$ and $-1$ are not interior L-eigenvalues of
$\phi(H).$ To show this fact, suppose first that $\lambda\in\{-1,1\}$ is an
interior L-eigenvalue of $\phi(H).$ Then, since by Corollary \ref{trace},
$\operatorname*{tr}(\phi(H))=\operatorname*{tr}(H)=0,$ and interior
L-eigenvalues are standard eigenvalues, $\phi(H)$ has distinct standard
eigenvalues $1$ and $-1.$

We first show that the entry in position (1,1) of $\phi(H)$ is different from
$\lambda.$ This is clear by Theorem \ref{lcs2x2},  if the entry in position $(1,2)$ of $\phi(H)$ is nonzero.  If the entry in position $(1,2)$ of $\phi(H)$ is zero, then
$\phi(H)$ is a lower triangular matrix with main diagonal entries $1$
and $-1$, and the $(2,1)$ entry of $\phi(H)$ has modulus $2$ (since by
Corollary \ref{trace}, the modulus of the anti-trace is preserved). Then, the
entry in position $(1,1)$ of $\phi(H)$ is different from $\lambda,$ as
otherwise, by Theorem \ref{lcs2x2}, $\lambda$ would not be an interior
L-eigenvalue of $\phi(H)$. 

By Lemma \ref{lintlim}, any matrix $B$ in a
sufficiently small neighborhood of $\phi(H)$ has an interior L-eigenvalue
arbitrarily close to $\lambda.$ By the continuity of $\phi^{-1},$ and since
$\phi^{-1}$ preserves the L-spectrum, any matrix  in a sufficiently small
neighborhood of $H$ has an L-eigenvalue arbitrarily close to $\lambda$, which is
impossible by Lemma \ref{E1221pert}.

Thus, $1$ and $-1$ are not interior L-eigenvalues of $\phi(H).$ By Corollary
\ref{l1}, neither $1$ nor $-1$ is an interior L-eigenvalue of $-\phi(H).$ Since
$\sigma_{\mathcal{K}}(H)=\sigma_{\mathcal{K}}(-H)=\{1,-1\}$, we conclude that $1$ and $-1$ are boundary
L-eigenvalues of both $\phi(H)$ and $-\phi(H).$ By Corollary \ref{2bd}, there
are $x,y\in\mathbb{R}$ such that
\[
\text{1) }\phi(H)=\left[
\begin{array}
[c]{cc}%
x & y\\
2-y & -x
\end{array}
\right]  \quad\text{ or}\quad\text{ 2) }\phi(H)=\left[
\begin{array}
[c]{cc}%
x & y\\
-2-y & -x
\end{array}
\right]  .
\]

Suppose that Case 1 holds. Then, by Condition 3 of Theorem \ref{lcs2x2},
applied to both $\phi(H)$ and $-\phi(H)$, we have
\begin{align*}
x+y  &  =-x+2-y \;\text{ and}\\
x-y  &  =-x-(2-y),
\end{align*}
implying that
\[
x=0\text{ and }y=1.
\]
A similar argument applied to Case 2 yields $x=0$ and $y=-1$. Thus, the claim follows.
\end{proof}

\section{Proof of the main results}

\label{proofmain}

\ \\
\ \\
\indent{\bf Theorem \ref{tmain1}}

\begin{proof}
Let $\phi:W_{2}\rightarrow W_{2}$ be a linear map that preserves the
L-spectrum. By Corollary \ref{trace}, either $A$ and $\phi(A)$ have the same
anti-trace for all $A\in W_{2}$, or $A$ and $\phi(A)$ have opposite
anti-traces for all $A\in W_{2}.$ When proving Theorem \ref{tmain1}, we only
consider the case in which $\phi$ preserves the anti-trace. The case when the
anti-trace of $A$ and $\phi(A)$ are opposite for all $A\in W_{2}$ can be
obtained by considering the orthogonal similarity via the matrix $T=\left[
-1\right]  \oplus\left[  1\right]  .$ More precisely, assume that $A$ and
$\phi(A)$ have opposite anti-traces. Then, $\pi(A)=T\phi(A)T,$ for $A\in
W_{2},$ is a linear map that preserves the anti-trace and symmetry, and, taking into
account Corollary \ref{rr1}, $\pi$ preserves the L-spectrum if and only if
$\phi$ does. Hence, by the result that we next show, $\pi$ preserves the
L-spectrum if and only if there is some $P\in M_{2},$ as in (\ref{PQ}), such
that $\pi(A)=PAP^{-1}$ for any $A\in W_{2}$, that is, $\phi(A)=(TP)A(TP)^{-1}$
for any $A\in W_{2}.$ Thus, the claim follows with $Q=TP.$

Necessity: Suppose that $\phi$ preserves the anti-trace. For $u,v\in
\mathbb{R},$ let%
\[
P(u,v):=\left[
\begin{array}
[c]{cc}%
u & v\\
v & u
\end{array}
\right].%
\]

Case 1: Assume that $W_{2}=S_{2}.$ By Lemmas \ref{E22-E11} and \ref{E12+E21}, we have
$\phi(E_{11})=E_{11},$ $\phi(E_{22})=E_{22}$, and $\phi(E_{12}+E_{21}%
)=E_{12}+E_{21}$. Thus, $\phi(A)=PAP^{-1}$ for all $A\in S_{2}$, where
$P=P(1,0)=I$.

Case 2: Assume now that $W_{2}=M_{2}$. By Lemma \ref{E22-E11}, for some $a\leq 0$, we have
\begin{align*}
\phi(E_{11})  &  =%
\begin{bmatrix}
1-a & \mp\sqrt{a^{2}-a}\\
\pm\sqrt{a^{2}-a} & a
\end{bmatrix}
=:\left[
\begin{array}
[c]{cc}%
\alpha^{2} & -\alpha\beta\\
\alpha\beta & -\beta^{2}%
\end{array}
\right] \\
&  =P(\alpha,\beta)E_{11}P^{-1}(\alpha,\beta).
\end{align*}
Without loss of generality, we  assume $\alpha \geq 0$, implying $\alpha \geq 1$ since $\alpha^2 = 1-a$ and $a\leq 0$.

By Lemma \ref{lE21} and taking into account that $\phi$ preserves the anti-trace, for some $b\geq 0$, we have
\begin{align*}
\phi(E_{21})  &  =\left[
\begin{array}
[c]{cc}%
\pm\sqrt{b^{2}+b} & -b\\
b+1 & \mp\sqrt{b^{2}+b}%
\end{array}
\right]  =: \allowbreak\left[
\begin{array}
[c]{cc}%
\gamma\delta & -\delta^{2}\\
\gamma^{2} & -\gamma\delta
\end{array}
\right] \\
&  =P(\gamma,\delta)E_{21}P^{-1}(\gamma,\delta).
\end{align*}
As above, we assume $\gamma \geq 0$, implying $\gamma \geq 1$.  

Then
\begin{align*}
\phi(E_{11}+E_{21})  &  =\left[
\begin{array}
[c]{cc}%
\alpha^{2} & -\alpha\beta\\
\alpha\beta & -\beta^{2}%
\end{array}
\right]  +\left[
\begin{array}
[c]{cc}%
\gamma\delta & -\delta^{2}\\
\gamma^{2} & -\gamma\delta
\end{array}
\right] \\
&  =\allowbreak\left[
\begin{array}
[c]{cc}%
\alpha^{2}+\gamma\delta & -\alpha\beta-\delta^{2}\\
\alpha\beta+\gamma^{2} & -\beta^{2}-\gamma\delta
\end{array}
\right]  .
\end{align*}
Since, by Lemma \ref{singular}, $\phi(E_{11}+E_{21})$ is singular, we have
\[
\det(\phi(E_{11}+E_{21}))=\allowbreak\left(  \alpha\gamma-\beta\delta\right)
\left(  \beta\gamma-\alpha\delta\right)  =0.
\]
Note that $\alpha\gamma-\beta\gamma\neq0$, as otherwise $(\alpha\gamma
)^{2}=(\beta\delta)^{2}$, or equivalently, $a=1+b$, a contradiction since
$a\leq0$ and $1+b>0.$ Thus, 
\begin{equation}\label{aldel}
\beta\gamma = \alpha\delta,
\end{equation}
 implying%
\[
0=(\alpha\delta)^{2}-(\beta\gamma)^{2}=(1-a)b+a(1+b)=a+b.
\]
Hence, $a=-b$ which yields $\alpha=\gamma$. Since $\alpha$ and $\gamma$ are nonzero,  from \eqref{aldel} we get $\beta = \delta .$ 
Now let
$P:=P(\alpha,\beta)$. Then,
\[
\phi(E_{11})=PE_{11}P^{-1}\quad\text{and}\quad\phi(E_{21})=PE_{21}P^{-1},
\]
implying
\begin{align*}
\phi(E_{22})  &  =I-\phi(E_{11})=I-PE_{11}P^{-1}\\
&  =P(I-E_{11})P^{-1}=PE_{22}P^{-1}.
\end{align*}
Moreover, taking into account Lemma \ref{E12+E21} and the fact that $\phi$ preserves the anti-trace, we have
\[
\phi(E_{12}+E_{21})=E_{12}+E_{21}=P(E_{12}+E_{21})P^{-1}.
\]
Thus, since $\phi(A)=PAP^{-1}$ for all the matrices $A$ in a basis for $M_{2}%
$, we have $\phi(A)=PAP^{-1}$ for all $A\in M_{2}.$

Sufficiency: Let $A\in W_{2}$ and let $P$ be as in (\ref{PQ}) with $\alpha
^{2}-\beta^{2}=1$. We assume that $\alpha>0$ as, otherwise, since
$PAP^{-1}=(-P)A(-P)^{-1},$ we may consider $-P$ instead of $P$. It is enough to
prove  $\sigma_{\mathcal{K}}(A)\subseteq\sigma_{\mathcal{K}}(\phi(A))$,
since by applying this result to $\phi^{-1}$, we get $\sigma_{\mathcal{K}}%
(\phi(A))\subseteq\sigma_{\mathcal{K}}(A)$. (Note that $\phi^{-1}
(A)=P^{-1}AP,$ where $P^{-1}$ still has the form of $P$ in (\ref{PQ}), with
$\beta$ replaced by $-\beta.$)

We show that if $(\lambda,x)$ is an L-eigenpair of $A$, then $(\lambda,Px)$ is
an L-eigenpair of $\phi(A)=PAP^{-1}$. For this purpose, we start by proving
two facts.  First, $P$ preserves the
Lorentz cone, that is, if $x\in\mathcal{K}$, then $Px\in\mathcal{K}$. Second,
$P$ preserves orthogonality, that is, if $x^{T}y=0$, then $(Px)^{T}(Py)=0,$
for $x,y\in\mathcal{K}$.

Let $x=[x_{1}\;x_{2}]^{T}\in\mathcal{K}$ and
\begin{equation}
\lbrack z_{1}\;z_{2}]^{T}:=Px=[x_{1}\alpha+x_{2}\beta,x_{1}\beta+x_{2}%
\alpha]^{T}.\label{z}%
\end{equation}
Then, $Px\in\mathcal{K}$ if and only if
\[
|z_{1}|=|x_{1}\alpha+x_{2}\beta|\leq x_{1}\beta+x_{2}\alpha=z_{2}.
\]
Since $|\beta|<\alpha$ and $|x_{1}|\leq x_{2},$ it follows that $z_2=x_{1}%
\beta+x_{2}\alpha\geq0.$ Also, because of 
\begin{equation}
z_{1}^{2}-z_{2}^{2}=x_{1}^{2}-x_{2}^{2}\leq0,\label{t1}%
\end{equation}
 we get that $Px\in\mathcal{K}$.

Now note that, if $x$ and $y$ are nonzero orthogonal vectors in $\mathcal{K}$,
then they lie on the boundary of $\mathcal{K}$. More specifically, one is a
positive multiple of $[1$ $1]^{T}$ and the other one is a positive multiple of
$[-1$ $1]^{T}.$ Since
\[
P[1 \ 1]^{T}=[\alpha+\beta,\alpha+\beta]^{T}\quad\text{and}\quad P[-1 \ 1]^{T}
=[-\alpha+\beta,\alpha-\beta]^{T}%
\]
are orthogonal, it follows that $P$ also preserves orthogonality.

Suppose that $(\lambda,x)$ is an L-eigenpair of $A$, that is, 
\[
x\neq 0, \quad x\in \mathcal{K},\quad(A-\lambda I)x\in\mathcal{K},\quad\text{ and }\quad x^{T}(A-\lambda
I)x=0.
\]
Since $P$ is invertible, we have $Px\neq 0$. Moreover, as  $P$ preserves the Lorentz cone, we have $y:=Px\in\mathcal{K}$ and
\[
(\phi(A)-\lambda I)y=P(A-\lambda I)P^{-1}Px=P[(A-\lambda I)x]\in\mathcal{K}.
\]
From the orthogonality of $x$ and $(A-\lambda I)x$ and the fact that $P$
preserves orthogonality, it follows that $y^{T}(\phi(A)-\lambda I)y=0.$ Thus,
$(\lambda,Px)$ is an L-eigenpair of $\phi(A).$
\end{proof}

\bigskip
{\bf Corollary \ref{cornature}}

\begin{proof}
By Theorem \ref{tmain1}, and arguing as in its proof, we may assume that
$\phi$ preserves the anti-trace, that is, $\phi(A)=PAP^{-1}$ for $P$ as in
(\ref{PQ}) with $\alpha^{2}-\beta^{2}=1.$ Moreover, we may assume
that $\alpha>0,$ as otherwise we consider $-P$ instead of $P$.

Assume that $(\lambda,x)$ is an L-eigenpair of $A,$ with $x=[x_{1}\;x_{2}]^{T}%
$. Let $z=[z_{1}\;z_{2}]^{T}$ be as in (\ref{z}). It was shown in the
sufficiency part of the proof of  Theorem \ref{tmain1} that $(\lambda,z)$ is an
L-eigenpair of $\phi(A).$ Since, by \eqref{t1}, $|x_{1}|<x_{2}$ if and only if
$|z_{1}|<z_{2}$, it follows that $z$ is an  L-eigenvector of $\phi(A)$ in the interior of $\mathcal{K}$
if and only if $x$ is an  L-eigenvector  of $A$ in the interior of $\mathcal{K}$. Since $A$ and $\phi(A)$
have the same L-spectrum, the claim follows. 
\end{proof}

\section{Conclusions}

\label{conclusions} Let $M_{n}$ denote the space of $n\times n$ real matrices
and $S_{n}$ denote the subspace of $M_{n}$ formed by the symmetric matrices. In
this paper, for $W_{2}\in\{M_{2},S_{2}\},$ we described the linear maps
$\phi:W_{2}\rightarrow W_{2}$ that preserve the Lorentz spectrum (L-spectrum
for short), that is, those maps $\phi$ for which  $A$ and $\phi(A)$ have the same
L-spectrum for all $A\in W_{2}.$ We have shown that
$\phi(A)=PAP^{-1}$, where $P$ is a matrix with a certain 
structure. In the case $W_{2}=S_{2}$, $P$ is a diagonal orthogonal matrix. In addition, we proved that such
preservers on $W_{2}$ do not change the nature (interior or boundary) of the L-eigenvalues.

In the case $n\geq3$, the characterization of the linear maps
$\phi:W_{n}\rightarrow W_{n}$ that preserve the L-spectrum and are standard
was given in \cite{Bueno2021}. (See \cite{Seeger3} in which the case
$W_{n}=M_{n}$ was also studied.) Recall that a linear map $\phi:W_{n}
\rightarrow W_{n}$ is said to be standard if there exist matrices $P,Q\in
M_{n}$ such that $\phi(A)=PAQ$ for all $A\in W_{n}$ or $\phi(A)=PA^{T}Q$ for
all $A\in W_{n}.$ In \cite{Bueno2021}, a conjecture was made that all maps
$\phi:W_{n}\rightarrow W_{n}$ that preserve the L-spectrum are, in fact,
standard, as has been shown here to happen for $n=2$. We also have seen here
that these preservers on $W_{2}=S_{2}$ have the same form as the standard ones
on $S_{n}$, for $n\geq3$. However, if $W_{2}=M_{2}$, they have a more general
form than those on $M_{n}$ for $n\geq3$.

Contrary to what happens when $n\geq3$, the Lorentz cone in
$\mathbb{R}^{n}$ with $n=2$ is a polyhedral cone which is a rotation of the
Pareto cone. Thus, our characterization of the linear maps $\phi
:M_{2}\rightarrow M_{2}$ also follows from the characterization of the linear
maps that preserve the Pareto spectrum \cite{zadsha}. However, we gave here an
independent proof hoping that it gives tools that may be helpful in proving
the still open conjecture stated in \cite{Bueno2021} that any linear
preservers of the L-spectrum on $S_{n}$ or $M_{n}$, for $n\geq3$, are standard
maps, which, together with the results in that reference, would complete the
description of such linear preservers.

\bigskip
{\bf Acknowledgments} We would like to thank Professor K. C. Sivakumar for proposing us the study of linear preservers of the Lorentz spectrum. This work is a continuation of the collaboration with him that resulted  in  publication \cite{Bueno2021}. We would also like to thank the anonymous referees of this paper for their comments and suggestions.


\begin{thebibliography}{1}

\bibitem {zadgold}
F. Alizadeh and D. Goldfarb.
\newblock Second-order cone
programming.
\newblock {\em Math. Prog.},  95 (2003), 3-51.

\bibitem {zadsha}
R. Alizadeh and F. Shakeri.
\newblock Linear maps preserving
Pareto eigenvalues.
\newblock {\em Linear and Multilinear Algebra}, 65 (2017), 1053-1061.

\bibitem {Bueno2021}
M. I. Bueno, S. Furtado, and K. C. Sivakumar.
\newblock Linear maps preserving the Lorentz-cone spectrum in certain subspaces
of $M_{n}$.
\newblock {\em Banach Journal of Mathematical Analysis}, 15 (2021), article 58.

\bibitem{martins2} 
J. A. C. Martins, A. Pinto da Costa,  I. N. Figueiredo, and J. J. Judice. 
\newblock  The directional instability problem in systems with frictional contacts.
\newblock {\em Comput. Methods Appl. Mech. Eng.}, 193 (2004), 357Ð384. 

\bibitem {Seeger1}
A. Seeger.
\newblock Eigenvalue analysis of equilibrium
processes defined by linear complementarity conditions.
\newblock {\em Linear Algebra Appl.} 292 (1999), 1-14.

\bibitem {Seeger2}
A. Seeger and M. Torki.
\newblock On eigenvalues induced by a
cone constraint.
\newblock {\em Linear Algebra Appl.} 372 (2003), 181-206.

\bibitem {Seeger3}
A. Seeger and M. Torki.
\newblock On spectral maps induced by
convex cones.
\newblock {\em Linear Algebra Appl.}, 592 (2020), 651-92.
\end{thebibliography}
\end{document}